\newtheorem{theorem}{Theorem}[section]
\newtheorem{remark}{Remark}[section]
\newtheorem{corollary}{Corollary}[section]
\newtheorem{lemma}{Lemma}[section]
\newtheorem{proposition}{Proposition}[section]
\newtheorem{example}{Example}[section]
\newtheorem{prescription}{Prescription}[section]
\newtheorem{definition}{Definition}[section]
\newcommand{\R}{\mathbb{R}}
\newcommand{\C}{\mathbb{C}}
\newcommand{\Q}{\mathbb{Q}}
\newcommand{\N}{\mathbbm{N}}
\newcommand{\E}{\mathbbm{E}}
\newcommand{\Levy}{L\'{e}vy\text{ }}
\newcommand{\dP}{\mathbbm{P}}
\newcommand{\mcl}{\mathcal}
\providecommand{\keywords}[1]{\textbf{\textit{Keywords:}} #1}
\newcommand{\mypar}{\par\text{ }}
\newcommand{\Cov}{\mathrm{Cov}}
\newcommand{\cccdot}{\hspace{-.05cm}  \cdot \hspace{-.075cm}}
\title{Association and other forms of positive dependence for {Feller} evolution systems}
\author{Eddie Tu%
 \thanks{\underline{Institution}: Dickinson College; \underline{Postal Address}:  Dickinson College, Department of Mathematics and Computer Science, PO Box 1773, Carlisle, PA 17013; \underline{Electronic address}: tue@dickinson.edu.
 }}
\begin{document}

\maketitle

\begin{abstract}
We prove characterizations of positive dependence 
for a general class of time-inhomogeneous Markov processes called Feller evolution processes (FEPs) and for jump-FEPs. General FEPs can be analyzed through their time and state-space dependent (extended) generators.  We will use the time and state-space dependent (extended) generators and time and state-space dependent \Levy measures to characterize the positive dependence of general FEPs and jump-FEPs, respectively. We conclude with applications of these results to additive processes, which are  time-inhomogeneous \Levy processes, often arising as useful examples in financial modeling. 
\end{abstract}

\noindent\keywords{association, orthant dependence,  Feller evolutions system, additive process,  time-inhomogeneous Markov process, comparison of Markov processes} 
\\ \\
\noindent
\textbf{\textit{2010 Mathematics Subject Classification:}} 60E07, 60E15, 60J25, 60J35, 60J75

\section{Introduction}

Feller processes are Markov processes which are useful models of dynamical systems that arise in finance and physics \cite{Bottcher2010}. Such processes are time-homogeneous and can be spatially inhomogeneous. A more general class of Markov processes are Feller evolution processes (FEP). General FEPs can be spatially inhomogeneous \textit{and} are time-inhomogeneous. Analogous to Feller processes, the Markov evolution of the FEP, called the Feller evolution system (FES), is strongly continuous. FEPs provide the structure for general temporally inhomogeneous models, such as additive processes and certain stochastic volatility models.

\par

Of a particular interest is the study of the dependence between the marginal processes and the dependence over time, which we call \textit{spatial dependence} and \textit{temporal dependence}, respectively. Some different notions of positive dependence include association (A), positive supermodular association (PSA), positive supermodular dependence (PSD), and positive orthant dependence (POD). One can better study the evolution of the process if the process satisfies a particular notion of spatial or temporal positive dependence. 

\par

It is known that (rich) Feller processes $X=(X_t)_{t\geq0}$ on $\R^d$ can be characterized by their state-space dependent characteristic triplet $(b(x), \Sigma(x), \nu(x,dy))$, where $b:\R^d\rightarrow\R^d$ describes the non-random behavior, $\Sigma:\R^d \rightarrow\R^{d\times d}$ describes the continuous diffusion-like behavior, and $\nu(\cdot,dy)$ is a measurable kernel describing the jump behavior of the process. Feller processes are often called L\'{e}vy-type processes, given that they behave locally like a \Levy process (for more on Feller processes, see \cite{Schilling2013}). 

\par

Feller evolution processes are Markov processes that are time-inhomogeneous and spatially inhomogeneous. Analogous to Feller processes, they also have a characteristic triplet $(b_t(x), \Sigma_t(x),\nu_t(x,dy))$, describing non-random, diffusion, and jump behavior, respectively, except that the characteristics are also time-dependent. In this paper, when a process with a characteristic triplet does not have a diffusion component $\Sigma$, we will refer to that process as a jump-process. 

\par

For \Levy processes $X$, the characterization of the positive dependence structures has been done by Herbst and Pitt  (1991) \cite{Herbst1991} in the case of Brownian motion with drift, i.e. $X\sim (b,\Sigma,0)$ and by Samorodnitsky (1995) \cite{Samorodnitsky1995} in the case of jump-\Levy processes, i.e. $X\sim (b,0,\nu)$. Samorodnitsky proved that jump-\Levy processes are spatially associated if and only if $\nu$ is concentrated on the positive and negative orthants $\R_+^d$ and $\R_-^d$, i.e. 
\begin{equation}
\label{resnick}
\nu((\R_+^d \cup \R_-^d)^c) = 0.
\end{equation}
This equivalence was also proven by Houdr\'{e} et. al. (1998) using covariance identities \cite{Houdre1998}. This result has since been extended to temporal association of jump-\Levy processes, and also to weaker forms of dependence in PSD and POD, by B\"{a}uerle (2008) \cite{Bauerle2008}. For general time-homogeneous Markov processes, characterizations for spatial and temporal association based on the generator have been given by Liggett (1985) \cite[p.80-83]{Liggett1985} and extended by  Szekli (1995) \cite[p.155]{Szekli1995} and R\"{u}schendorf  (2008)\cite{Ruschendorf2008}. Specifically, for Feller processes, Mu-fa Chen (1995) characterized spatial association for stochastically monotone diffusion-like processes, $(b(x),\Sigma(x),0)$, and Jie Ming Wang (2009) characterized spatial association for stochastically monotone jump-Feller processes,  $(b(x), 0, \nu(x,dy))$, under the condition
\begin{equation}
\label{resnickx}
\nu(x,(\R_+^d \cup \R_-^d)^c) = 0, \hspace{.4cm} \forall x\in\R^d
\end{equation}
 Tu (2019a)  extended Wang's results for association of jump-Feller processes with relaxed continuity and integrability conditions and also extended Liggett's characterization of association in \cite[p.80]{Liggett1985} from the generator to the extended generator, an integro-differential operator. Tu also extended the characterization to weaker positive dependence structures: WA, PSA, PSD, POD, PUOD, and PLOD \cite{Tu2018a}.

\par

As far as the author is concerned, little has been done on characterizing dependence structures for time-inhomogeneous Markov processes. The goal of this paper is provide such characterizations for FEPs, a general class of time-inhomogeneous Markov processes. This characterization will be based on the extended generators of the process. Moreover, we have interest in jump-FEPs, i.e. having characteristics $(b_t(x), 0, \nu_t(x,dy))$. We will provide a characterization of positive dependence based on the time-dependent \Levy measure: 
\begin{equation}
\label{resnicktx}
\nu_t(x,(\R_+^d \cup \R_-^d)^c) = 0, \hspace{.4cm}\forall t\geq0, \hspace{.1cm} \forall x\in\R^d.
\end{equation}

We will prove condition \eqref{resnicktx} is equivalent to spatial association, WA, PSA, PSD, POD, PUOD, and PLOD of the Feller evolution process. Our technique will be based on B\"{o}ttcher's transformation of FEP into a Feller process by adding another dimension to the process and space. From there, we can apply the results by Tu in \cite{Tu2018a} on Feller processes to study the spatial dependence of the FEP. Then we will provide examples to which we can apply the results, namely additive processes. Additionally, the techniques of our proofs can be used to prove comparison theorems of Feller evolution processes.

\par

It is important to clarify the distinction between this paper and a concurrent paper of ours, titled ``On the association and other forms of positive dependence for Feller processes" \cite{Tu2018a}. In the present paper, we characterize positive dependence for FEPs, which are more general than the time-homogeneous Feller processes studied in \cite{Tu2018a}. However, we need the results in \cite{Tu2018a} to prove the results in this present paper. More specifically, we utilize B\"{o}ttcher's transformation of time-inhomogeneous FEPs into time-homogeneous Feller processes (see \cite{Bottcher2013})  and, in a non-trivial way, apply our results in \cite{Tu2018a} to prove the positive dependence results of FEPs in this paper.  Additionally, the results here apply to a larger class of Markov processes, like additive processes. Hence, we felt that the difference in temporal behavior between FEPs and Feller processes, the necessity of \cite{Tu2018a} to prove results in this paper, and the difference in technique of proof are enough reason to have two distinct papers in this area.

\par

Additionally, we want to distinguish between the results here and the paper by R\"{u}schendorf et. al. \cite{Ruschendorf2016} due to some similarities. The authors of \cite{Ruschendorf2016} prove comparison theorems of  time-inhomogeneous Markov processes, including FEPs and processes with independent increments (PIIs).  These  theorems give conditions for comparing such Markov processes based on certain function classes induced by stochastic orderings. Some of the dependence structures we study in this paper are induced by the same stochastic orderings. However, we focus on characterizing dependence structures in FEPs.  Also, our style of proof differs, in that we make use of B\"{o}ttcher's homogeneous transformation of FEPs. Finally, we recognize that the comparison theorem we include in Section \ref{sec:comparison} is not as general as \cite[Thm.3.3]{Ruschendorf2016}, but it is interesting to show that B\"{o}ttcher's transformation is another nice technique to prove comparison theorems for time-inhomogeneous Markov processes. 

\par

Our paper is organized in the following way. In Section \ref{sec:background}, we give some background on the positive dependence structures, association, WA, PSA, PSD,  POD, PUOD, and PLOD. We also provide background on time-inhomogeneous Markov processes and Feller evolution processes. We will also summarize B\"{o}ttcher's transformation of FEP to a Feller process. In Section \ref{sec:mainresults}, we state and prove our main results about the positive dependence structures of stochastically monotone jump-FEPs. Finally, in Section \ref{sec:examples}, we provide applications to additive processes and comparison theorems.

\section{Background}
\label{sec:background}

\subsection{Positive dependence structures}

We first give a brief background on various positive dependence structures. For a more detailed description of these structures, see \cite{Tu2018a, TuThesis, Mueller2002}. Let $X = (X_1,...,X_d)$ be a random vector in $\R^d$. We say $X$ is \textbf{positively correlated (PC)} if $\Cov(X_i,X_j)\geq0$ for all $i,j\in\{1,...,d\}$. This is one of the weakest forms of positive dependence, and we are interested in stronger forms of positive dependence which will be of greater use in our study of stochastic processes. We list them here.

\par

Let $X=(X_1,...,X_d)$ be a random vector in $\R^d$.

\begin{definition}
{\rm 
$X$ is \textbf{associated (A)} if we have $$\Cov(f(X), g(X)) \geq0,$$
for all $f,g:\R^d\rightarrow\R$ non-decreasing in each component, such that $\Cov(f(X),g(X))$ exists.
}
\end{definition}


\begin{definition}
\label{def:WA}
{\rm 
$X$ is \textbf{weakly associated (WA)} if, for any pair of disjoint subsets $I,J\subseteq\{1,..,d\}$, with $|I| = k$, $|J|=n$,
\begin{equation*}
\label{WA}
\Cov(f(X_I), g(X_J))\geq0,
\end{equation*}
where $X_I := (X_i:i\in I)$, $X_J := (X_j:j\in J)$, for any  $f:\R^k\rightarrow\R$, $g:\R^{n}\rightarrow\R$ non-decreasing, such that  $\Cov(f(X_I), g(X_J))$ exists.
}
\end{definition}

\begin{definition}
{\rm 
$X$ is \textbf{positive supermodular associated (PSA)} if $\Cov(f(X), g(X))\geq0$ for all $f,g\in \mcl{F}_{ism} := \{h:\R^d\rightarrow\R, \text{ non-decreasing, supermodular}\}$. $f$ 
\textbf{Supermodular} means, for all $x,y\in\R^d$, $f(x\wedge y) + f(x\vee y) \geq f(x) + f(y),$
where $x\wedge y$ is the component-wise minimum, and $x\vee y$ is the component-wise maximum. 
}
\end{definition}

Now let $\hat{X} = (\hat{X}_1,...,\hat{X}_d)$ be a random vector such that for all $i$, $\hat{X}_i \stackrel{d}= X_i$ and $\hat{X}_i$'s are mutually independent.

\begin{definition}
\label{def:psd}
{\rm 
$X$ is \textbf{positive supermodular dependent (PSD)} if, for all $f:\R^d\rightarrow\R$ supermodular, 
$\E f(\hat{X}) \leq \E f(X)$.
}
\end{definition}

\begin{definition}
{\rm 
$X$ is \textbf{positive upper orthant dependent (PUOD)} if for all $t_1,...,t_d\in\R$,
\begin{align*}
\dP(X_1 >t_1,...,X_d >t_d) \geq \dP(X_1>t_1)...\dP(X_d>t_d).
\end{align*}
}
\end{definition}

\begin{definition}
{\rm 
$X$ is \textbf{positive lower orthant dependent (PLOD)} if for all $t_1,...,t_d\in\R$,
\begin{align*}
\dP(X_1 \leq t_1,...,X_d \leq t_d) \geq \dP(X_1\leq t_1)...\dP(X_d\leq t_d).
\end{align*}
}
\end{definition}

\begin{definition}
\label{def:pod}
{\rm 
$X$ is \textbf{positive orthant dependent (POD)} if  $X$ is PUOD and PLOD. 

\par

One can also state another equivalent definition to PUOD (PLOD). For $i=1,...,d$, let $f_i:\R\rightarrow\R_+$ be non-decreasing (non-increasing) functions.  Then $X=(X_1,...,X_d)$ \textbf{PUOD} (\textbf{PLOD}) if and only if 
\begin{center}
$\E \left(\prod_{i=1}^d f_i (X_i) \right) \geq \prod_{i=1}^d \E f_i (X_i).$
\end{center}
}
\end{definition}

Definitions \ref{def:psd}-\ref{def:pod} can also be stated in terms of stochastic orderings. For more on this, we refer the reader to M\"{u}ller and Stoyan's book \cite[Ch.3]{Mueller2002}. It is useful to see the relationship between these different forms of positive dependence. We state the relationships in Proposition \ref{propdepmap}.



\begin{proposition}
\label{propdepmap}
The  implications in Figure \ref{fig:prop2_1} hold.
\begin{center}
\begin{figure}[h]
  \centering
  \includegraphics[trim = {0cm 0cm 0cm 0cm}, scale=.75]{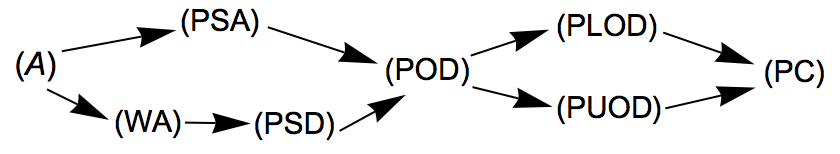}\\
  \caption{Implication map of various positive dependence structures}\label{fig:prop2_1}
\end{figure}
\end{center}
\end{proposition}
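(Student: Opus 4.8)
The plan is to verify each arrow of Figure~\ref{fig:prop2_1} separately, grouping them by the effort required. A first group is immediate from the definitions: the implications $\mathrm{POD}\Rightarrow\mathrm{PUOD}$ and $\mathrm{POD}\Rightarrow\mathrm{PLOD}$ are built into Definition~\ref{def:pod}, while $\mathrm{A}\Rightarrow\mathrm{WA}$ and $\mathrm{A}\Rightarrow\mathrm{PSA}$ hold because in each case association is being asked for over a strictly larger class of test functions: for disjoint $I,J\subseteq\{1,\dots,d\}$ and non-decreasing $f,g$ on the respective coordinate blocks, the maps $x\mapsto f(x_I)$ and $x\mapsto g(x_J)$ are coordinatewise non-decreasing on all of $\R^d$, and every $h\in\mcl{F}_{ism}$ is in particular coordinatewise non-decreasing, so association applied to these functions yields WA and PSA respectively.

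A second group needs a specific choice of test function. For $\mathrm{WA}\Rightarrow\mathrm{POD}$ I would induct on $d$ using bilinearity of the covariance: with $I=\{1\}$, $J=\{2,\dots,d\}$, $f(x_1)=\mathbbm{1}_{\{x_1>t_1\}}$ and $g(x_2,\dots,x_d)=\prod_{i=2}^{d}\mathbbm{1}_{\{x_i>t_i\}}$ (both non-decreasing), WA gives $\dP(X_1>t_1,\dots,X_d>t_d)\ge\dP(X_1>t_1)\,\dP(X_2>t_2,\dots,X_d>t_d)$, and iterating on the second factor gives PUOD; PLOD comes from the same induction together with the observation that WA also forces a non-decreasing and a non-increasing function of disjoint blocks to have non-positive covariance (apply WA to $f$ and $-g$), so that in $\dP(X_1\le t_1,\dots,X_d\le t_d)=\dP(X_2\le t_2,\dots,X_d\le t_d)-\E\big[\mathbbm{1}_{\{X_1>t_1\}}\prod_{i=2}^{d}\mathbbm{1}_{\{X_i\le t_i\}}\big]$ the subtracted term is at most $\dP(X_1>t_1)\,\dP(X_2\le t_2,\dots,X_d\le t_d)$. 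For $\mathrm{PSD}\Rightarrow\mathrm{POD}$, observe that for fixed thresholds $x\mapsto\prod_{i=1}^{d}\mathbbm{1}_{\{x_i>t_i\}}$ is a product of non-negative non-decreasing univariate functions, hence supermodular (supermodularity can be checked one coordinate pair at a time, the remaining factors contributing only a non-negative constant), so PSD gives $\E\prod_i\mathbbm{1}_{\{X_i>t_i\}}\ge\E\prod_i\mathbbm{1}_{\{\hat X_i>t_i\}}=\prod_i\dP(X_i>t_i)$; the product $x\mapsto\prod_i\mathbbm{1}_{\{x_i\le t_i\}}$ of non-negative non-increasing univariate functions is likewise supermodular and yields PLOD. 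Finally $\mathrm{PUOD}\Rightarrow\mathrm{PC}$ and $\mathrm{PLOD}\Rightarrow\mathrm{PC}$ follow, whenever $\Cov(X_i,X_j)$ exists, from Hoeffding's covariance identity
\[ \Cov(X_i,X_j)=\int_{\R}\int_{\R}\big[\dP(X_i>s,\,X_j>t)-\dP(X_i>s)\,\dP(X_j>t)\big]\,ds\,dt \]
(and its lower-orthant analogue obtained by replacing $\{X_i>s,\,X_j>t\}$ with $\{X_i\le s,\,X_j\le t\}$), whose integrand is non-negative under PUOD, respectively PLOD.

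The one arrow that is not essentially bookkeeping is $\mathrm{PSA}\Rightarrow\mathrm{PSD}$, i.e.\ that positive supermodular association forces $\E f(\hat X)\le\E f(X)$ for every supermodular $f$. I would first reduce, by a standard truncation-and-mollification argument (convolution with a smooth kernel preserves supermodularity), to bounded smooth supermodular $f$. The model case is already instructive: for $f$ of the product form $\prod_{j\in S}h_j(x_j)$ with each $h_j$ non-negative and non-decreasing, the gap $\E\big[\prod_{j\in S}h_j(X_j)\big]-\prod_{j\in S}\E[h_j(X_j)]$ telescopes into a sum, with non-negative coefficients, of terms $\Cov\big(h_j(X_j),\prod_{\ell\in S,\ \ell>j}h_\ell(X_\ell)\big)$, in each of which the first argument is a function of a single coordinate and the second a product of non-negative non-decreasing univariate functions -- both therefore lie in $\mcl{F}_{ism}$, so each covariance is non-negative by PSA. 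Passing from this subclass to all supermodular $f$ is the delicate point: here I would either invoke the reduction of the supermodular order to a generating class of test functions (cf.\ \cite[Ch.~3]{Mueller2002} and the literature connecting association with the supermodular order), or run a coordinate-by-coordinate interpolation between $X$ and $\hat X$, tracking at each swap that the functions appearing in the resulting covariances genuinely lie in $\mcl{F}_{ism}$. I expect this implication -- specifically the passage to all supermodular test functions -- to be the main obstacle; every other arrow in Figure~\ref{fig:prop2_1} is routine, and any remaining arrows follow by transitivity.
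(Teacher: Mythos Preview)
The paper's own proof of this proposition is not a proof at all in the usual sense: it simply refers the reader to \cite[Ch.~3]{Mueller2002}, \cite{TuThesis}, and (for the implications involving PSD) \cite{Christofides2004}. Your proposal therefore takes a genuinely different route, in that you actually sketch the arguments for each arrow rather than deferring to the literature. The elementary arrows you handle ($\mathrm{A}\Rightarrow\mathrm{WA}$, $\mathrm{A}\Rightarrow\mathrm{PSA}$, $\mathrm{POD}\Rightarrow\mathrm{PUOD}/\mathrm{PLOD}$, $\mathrm{WA}\Rightarrow\mathrm{POD}$, $\mathrm{PSD}\Rightarrow\mathrm{POD}$, and $\mathrm{PUOD}/\mathrm{PLOD}\Rightarrow\mathrm{PC}$ via Hoeffding) are all standard and your sketches are correct.

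The one place where your proposal is incomplete is exactly the place where the paper feels the need to cite a separate reference: the implications passing through PSD. The result that weak association (and hence PSA) implies PSD is the content of Christofides and Vaggelatou \cite{Christofides2004}, and it is not a bookkeeping argument. Your product-form calculation is fine, but your two proposed extensions to general supermodular $f$ are both problematic as stated: the ``generating class'' reduction you allude to requires knowing that indicator-type functions generate the supermodular cone in an appropriate sense, which is itself the substance of the result; and in a naive coordinate-by-coordinate interpolation the functions that appear in the intermediate covariances are of the form $y\mapsto\E f(\hat X_1,\dots,\hat X_{j-1},y,X_{j+1},\dots,X_d)$, which are monotone but need not be supermodular in the remaining variables, so PSA alone does not obviously control them. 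You are right that this is the main obstacle; the honest resolution is either to reproduce the Christofides--Vaggelatou argument or, as the paper does, to cite it.
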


\begin{proof}
Proofs for these implications can be found in  \cite[Ch.3]{Mueller2002} and  \cite{TuThesis}. Implications involving PSD can be found in \cite{Christofides2004}. \end{proof}

These notions of dependence can be extended from random vectors to stochastic processes.  We will define, in the following subsection, dependence in a time-inhomogeneous Markov process.


\subsection{Time-inhomogeneous Markov processes}

Let $X = (X_t)_{t\geq0}$ be a Markov process in $\R^d$ on the space $(\Omega, \mcl{G}, (\mcl{G}_t)_{t\geq0}, \dP)$, where $\Omega$ is the sample space, $\mcl{G}$ is the $\sigma$-algebra, $(\mcl{G}_t)_{t\geq0}$ is the filtration, and $\dP$ is the probability measure. This means process $X$ satisfies the \textbf{Markov property}:
\begin{equation*}
\dP(X_t \in A|\mcl{G}_s) = \dP(X_t \in A|X_s), \hspace{.5cm} \forall \hspace{.1cm} s\leq t, \hspace{.1cm} A\in\mcl{B}(\R^d).
\end{equation*}
 We define the \textbf{Markov evolution} to be the family of linear operators $(T_{s,t})_{0\leq s\leq t<\infty}$ on $B_b(\R^d)$, the space of bounded functions from $\R^d$ to $\R$, by 
\begin{equation*}
T_{s,t} f(x) = \E f(X_t|X_s=x).
\end{equation*}
We will simply write $(T_{s,t})_{s\leq t}$ to mean $(T_{s,t})_{0\leq s\leq t<\infty}$ throughout this document.  We say Markov process $X$ is \textbf{normal} if $T_{s,t}:B_b(\R^d) \rightarrow B_b(\R^d)$ for all $0\leq s\leq t<\infty$. 

\begin{proposition}
\label{prop:propertiesME}
A normal Markov process $X$ with Markov evolution $(T_{s,t})_{s\leq t}$ satisfies the following properties:
\begin{enumerate}[noitemsep]
\item $T_{s,s}=I$ for all $s\geq0$.
\item $T_{r,s} T_{s,t} = T_{r,t}$, for all $0\leq r\leq s\leq t<\infty$ (Chapman-Kolmogorov).
\item $f\geq0$ implies $T_{s,t}f\geq0$ for all $0\leq s\leq t <\infty$ (positivity-preserving).
\item $||T_{s,t}||\leq 1$ for all $0\leq s\leq t<\infty$ (contraction).
\item $T_{s,t}1 = 1$.
\end{enumerate}
\end{proposition}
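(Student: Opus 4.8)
The plan is to verify each of the five properties directly from the definition $T_{s,t}f(x) = \E[f(X_t)\mid X_s = x]$, using only elementary properties of (regular) conditional expectation together with the Markov property. First I would fix, for each pair $s\le t$, a regular version of the conditional distribution of $X_t$ given $X_s$, so that $x\mapsto T_{s,t}f(x)$ is a genuine Borel function and the manipulations below are legitimate; normality of $X$ then guarantees that $T_{s,t}$ maps $B_b(\R^d)$ into itself, so that iterated applications make sense.

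Properties 1, 3, 4, and 5 are then immediate. For (1), $T_{s,s}f(x) = \E[f(X_s)\mid X_s = x] = f(x)$, so $T_{s,s} = I$. For (3), if $f\ge 0$ then $f(X_t)\ge 0$, so its conditional expectation is nonnegative, giving $T_{s,t}f\ge 0$. For (4), $|T_{s,t}f(x)| \le \E[|f(X_t)|\mid X_s = x] \le \|f\|_\infty$ pointwise, hence $\|T_{s,t}f\|_\infty \le \|f\|_\infty$ and $\|T_{s,t}\|\le 1$. For (5), applying this reasoning to $f\equiv 1$ (or noting the conditional expectation of the constant $1$ is $1$) gives $T_{s,t}1 = 1$.

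The one step requiring genuine care is (2), the Chapman--Kolmogorov identity, which I would obtain from the tower property of conditional expectation combined with the Markov property: for $0\le r\le s\le t$,
\[
T_{r,t}f(x) = \E[f(X_t)\mid X_r = x] = \E\bigl[\,\E[f(X_t)\mid \mcl{G}_s]\mid X_r = x\bigr] = \E\bigl[\,\E[f(X_t)\mid X_s]\mid X_r = x\bigr],
\]
where the final equality is precisely the Markov property. Recognizing the inner conditional expectation as $(T_{s,t}f)(X_s)$ and applying the definition of $T_{r,s}$ to the bounded Borel function $T_{s,t}f$ yields $T_{r,t}f = T_{r,s}(T_{s,t}f)$. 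The main obstacle is the measure-theoretic bookkeeping: justifying that $\E[f(X_t)\mid X_s] = (T_{s,t}f)(X_s)$ almost surely (a consequence of the regular conditional distribution chosen at the outset) and that the composition $T_{r,s}\circ T_{s,t}$ is well defined on $B_b(\R^d)$, which is exactly where normality of $X$ is used. Beyond this, no substantive idea is needed.
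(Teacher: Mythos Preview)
Your proposal is correct: each of the five properties follows exactly as you indicate from the definition $T_{s,t}f(x)=\E[f(X_t)\mid X_s=x]$, with the tower property plus the Markov property yielding Chapman--Kolmogorov and normality ensuring the composition in (2) is well defined on $B_b(\R^d)$.

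The paper does not actually supply its own argument here; it simply cites Applebaum \cite[p.~144]{Applebaum2004}. So your direct verification is more self-contained than what the paper presents, but it is entirely in line with the standard proof one finds in that reference. There is nothing to correct.
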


\begin{proof}
See Applebaum \cite[p.144]{Applebaum2004}.
\end{proof}

The time-inhomogeneous Markov processes that we will consider in this paper have Markov evolutions which satisfy a property called strong continuity. Consider the Banach space $(C_0(\R^d), ||\cdot||_\infty)$, where $C_0(\R^d)$ are functions from $\R^d$ to $\R$ that are continuous, bounded, and vanish at infinity, and $||\cdot||_\infty$ is the sup-norm. 
We say the Markov evolution is \textbf{strongly continuous} on $C_0(\R^d)$ if for every pair $0\leq s\leq t<\infty$
\begin{equation*}
\label{inhomo:strongcont}
\lim_{(u,v)\rightarrow(s,t)} ||T_{u,v} f - T_{s,t} f||_\infty = 0, \hspace{.5cm} \forall f\in C_0(\R^d).
\end{equation*}

If a Markov evolution $(T_{s,t})_{s\leq t}$ on $C_0(\R^d)$ is a strongly continuous, positivity-preserving, contraction satisfying Chapman-Kolmogorov equations, then we call $(T_{s,t})_{s\leq t}$ a \textbf{Feller evolution system (FES)} and its corresponding process $X=(X_t)_{t\geq0}$ a \textbf{Feller evolution process (FEP)}. The FES and FEP can be thought of as the time-inhomogeneous analogue to Feller semigroups and Feller processes (see B\"{o}ttcher et. al. \cite{Schilling2013} for background on Feller processes). 

\par

For a FES, we can define a family of left and right generators. The right-generators $(\mcl{A}_s^+)_{s\geq0}$ of FES $(T_{s,t})_{s\leq t}$ is defined by
\begin{equation*}
\mcl{A}_s^+ f = \lim_{h\searrow0} \frac{T_{s,s+h} f - f}{h}
\end{equation*} 
for all $f\in\mcl{D}(\mcl{A}_s^+)$, the subspace of functions in $C_0(\R^d)$ for which the above limit exists in $||\cdot||_\infty$. Similarly, the left-generators $(\mcl{A}_s^-, \mcl{D}(\mcl{A}_s^-))_{s\geq0}$ by 
\begin{equation*}
\mcl{A}_s^- f = \lim_{h\searrow0} \frac{T_{s-h,s} f - f}{h}.
\end{equation*} 
We can also express the left and right derivatives of the FES in terms of the left and right generators: 

\begin{enumerate}[itemsep=0.25pt]
\begin{multicols}{2}
\item $\displaystyle \frac{d}{dt}^+ T_{s,t}  = T_{s,t} \mcl{A}_t^+$ (forward eqn.)
\item $\displaystyle \frac{d}{dt}^- T_{s,t}   = T_{s,t} \mcl{A}_t^-$
\columnbreak 
\item $\displaystyle \frac{d}{ds}^+ T_{s,t}  = -\mcl{A}_s^+ T_{s,t}$
\item  $\displaystyle \frac{d}{ds}^- T_{s,t}  = -\mcl{A}_s^- T_{s,t}$ (backward eqn.).
\end{multicols}
\end{enumerate}

Assume now that $\mcl{D}(\mcl{A}_s^+),\mcl{D}(\mcl{A}_s^-) \supset C_c^\infty(\R^d)$, the space of smooth functions with compact supprt, for all $s\geq0$. By the theorem of Courr\`{e}ge \cite{Courrege1965}, we have that for every $s\geq0$, $-\mcl{A}_s^\pm|_{C_c^\infty(\R^d)}$ is a pseudo-differential operator:
\begin{equation}
\label{inhomo:pdo}
\mcl{A}_s^\pm|_{C_c^\infty(\R^d)} f(x) = (2\pi)^{-d/2} \int_{\R^d} e^{ix\cdot\xi} p_s^\pm(x,\xi) \hat{f}(\xi) d\xi
\end{equation}
where $-p_s^\pm(x,\xi)$ is a continuous negative definite function (cndf) for each $s\geq0$ (see \cite[Ch.4.5]{Jacob2001}). We call $p_s^\pm(x,\xi)$ the \textbf{symbol of the generator} $\mcl{A}_s^\pm$, and the $(p_s^\pm(x,\xi))_{s\geq0}$ the \textbf{family of symbols of the process}. When $C_c^\infty(\R^d)\subset \mcl{D}(\mcl{A}_s^+),\mcl{D}(\mcl{A}_s^-)$ for all $s\geq0$, we say that the generators have \textbf{rich domain} or that the associated \textbf{Markov process is rich}. In the FESs we study, the left and right generators will coincide. B\"{o}ttcher gives conditions for this situation \cite{Bottcher2013}, which we write in the following theorem.

\begin{theorem}[B\"{o}ttcher (2013) \cite{Bottcher2013}]
\label{leftrightequal}
Let $(T_{s,t})_{s\leq t}$ be a FES with left and right generators $(\mcl{A}_s^+, \mcl{D}(\mcl{A}_s^+))_{s\geq0}$ and $(\mcl{A}_s^-, \mcl{D}(\mcl{A}_s^-))_{s\geq0}$ with corresponding symbols $(p_s^+(x,\xi))_{s\geq0}$ and $(p_s^-(x,\xi))_{s\geq0}$. If 
\begin{equation}
\label{inhomocont}
p_s^\pm(x,\xi) \text{ is continuous in } s \text{ for all } x,\xi\in\R^d
\end{equation}
 and is bounded, i.e. there exists $C^\pm>0$ such that 
\begin{equation}
\label{inhomobounded}
p_s^\pm (x,\xi) \leq C^\pm (1+ |\xi|^2), \hspace{.5cm} \forall s\geq0,\hspace{.2cm} \forall x,\xi\in\R^d,
\end{equation}
then $\mcl{A}_s^+ = \mcl{A}_s^-$ for all $s\geq0$.
\end{theorem}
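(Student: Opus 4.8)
My plan is to reduce everything to the rich core $C_c^\infty(\R^d)$ and to a single continuity statement about the generator family. By \eqref{inhomo:pdo}, the operators $\mcl{A}_s^+$ and $\mcl{A}_s^-$ restricted to $C_c^\infty(\R^d)$ are the pseudo-differential operators with symbols $p_s^+$ and $p_s^-$, so it suffices to show that $\mcl{A}_s^+ f=\mcl{A}_s^- f$ in $(C_0(\R^d),\|\cdot\|_\infty)$ for every $f\in C_c^\infty(\R^d)$: this forces $p_s^+\equiv p_s^-$ and hence $\mcl{A}_s^+=\mcl{A}_s^-$. The first thing I would establish is a lemma stating that, under \eqref{inhomocont} and \eqref{inhomobounded}, for each fixed $f\in C_c^\infty(\R^d)$ the maps $s\mapsto\mcl{A}_s^\pm f$ are norm-continuous from $[0,\infty)$ into $C_0(\R^d)$ and locally bounded, i.e.\ $\sup_{s\le b}\|\mcl{A}_s^\pm f\|_\infty<\infty$ for every $b>0$. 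Both facts come from writing out \eqref{inhomo:pdo} and noting that $\hat f$ is a Schwartz function, so that \eqref{inhomobounded} supplies the integrable majorant $2C^\pm(1+|\xi|^2)|\hat f(\xi)|$ for the $\xi$-integral; the local bound is then immediate, and \eqref{inhomocont} together with dominated convergence gives the continuity (modulo the uniformity issue discussed below).

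Granting the lemma, here is how I would finish. Fix $f\in C_c^\infty(\R^d)$; by the richness hypothesis $f\in\mcl{D}(\mcl{A}_t^+)\cap\mcl{D}(\mcl{A}_t^-)$ for all $t$. Combining the lemma with the joint strong continuity and contractivity of $(T_{s,t})_{s\le t}$ shows that $t\mapsto T_{s,t}\mcl{A}_t^{\pm}f$ is norm-continuous on $[s,\infty)$ (split $\|T_{s,t'}\mcl{A}_{t'}^{\pm}f-T_{s,t}\mcl{A}_t^{\pm}f\|_\infty\le\|\mcl{A}_{t'}^{\pm}f-\mcl{A}_t^{\pm}f\|_\infty+\|T_{s,t'}\mcl{A}_t^{\pm}f-T_{s,t}\mcl{A}_t^{\pm}f\|_\infty$, using $\|T_{s,t'}\|\le 1$ for the first term and strong continuity for the second). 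Since the forward equation gives $\frac{d^+}{dt}T_{s,t}f=T_{s,t}\mcl{A}_t^{+}f$ and this right-derivative is now norm-continuous, the fundamental theorem of calculus for Banach-space-valued maps with a continuous one-sided derivative yields $T_{s,t}f-f=\int_s^t T_{s,u}\mcl{A}_u^{+}f\,du$. Applying the identical argument to the left-derivative identity $\frac{d^-}{dt}T_{s,t}f=T_{s,t}\mcl{A}_t^{-}f$ gives $T_{s,t}f-f=\int_s^t T_{s,u}\mcl{A}_u^{-}f\,du$. Subtracting, $\int_s^t T_{s,u}(\mcl{A}_u^+-\mcl{A}_u^-)f\,du=0$ for all $0\le s\le t$; the integrand is norm-continuous, so it must vanish identically on $[s,\infty)$, and evaluating at $u=s$ with $T_{s,s}=I$ gives $(\mcl{A}_s^+-\mcl{A}_s^-)f=0$. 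As $s\ge 0$ and $f\in C_c^\infty(\R^d)$ were arbitrary, this completes the reduction.

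I expect the main obstacle to be the uniformity-in-$x$ in the continuity lemma: dominated convergence immediately gives $\mcl{A}_{s'}^{\pm}f(x)\to\mcl{A}_s^{\pm}f(x)$ for each fixed $x$ as $s'\to s$, and the quadratic bound \eqref{inhomobounded} controls the high-frequency part $\int_{|\xi|>R}$ uniformly in $x$ and $s'$; but making the remaining piece $\int_{|\xi|\le R}|p_{s'}^\pm(x,\xi)-p_s^\pm(x,\xi)|\,|\hat f(\xi)|\,d\xi$ small uniformly in $x$ needs more than the bare pointwise continuity in \eqref{inhomocont}. Here I would invoke the structure of the Courr\`{e}ge symbol class --- joint continuity of $p_s^\pm(\cdot,\xi)$ in $x$ together with the uniform-in-$(s,x)$ bound \eqref{inhomobounded} --- to upgrade this to the required equicontinuity. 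A secondary, purely bookkeeping point is that the argument above gives equality of $\mcl{A}_s^+$ and $\mcl{A}_s^-$ on the core $C_c^\infty(\R^d)$, equivalently of their symbols; promoting this to equality of $\mcl{A}_s^+$ and $\mcl{A}_s^-$ as operators on their full domains uses that each $\mcl{A}_s^\pm$ is the restriction to its domain of the pseudo-differential operator attached to its symbol.
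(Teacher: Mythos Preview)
The paper does not give its own proof of this statement: Theorem~\ref{leftrightequal} is quoted from B\"{o}ttcher \cite{Bottcher2013} and is used as an external input, so there is nothing in the present paper to compare your argument against. Your outline --- establishing norm-continuity of $s\mapsto \mcl{A}_s^{\pm}f$ on $C_c^\infty(\R^d)$ via the pseudo-differential representation and the bound \eqref{inhomobounded}, integrating the forward and left-derivative identities to obtain two Bochner-integral formulas for $T_{s,t}f-f$, and then subtracting --- is a reasonable strategy and is in the spirit of how such results are typically proved. You have also correctly flagged the genuine technical point: the hypothesis \eqref{inhomocont} is only pointwise continuity in $s$ for each fixed $(x,\xi)$, and upgrading this to $\|\mcl{A}_{s'}^{\pm}f-\mcl{A}_s^{\pm}f\|_\infty\to 0$ requires some additional uniformity in $x$ that is not literally contained in \eqref{inhomocont}--\eqref{inhomobounded} as stated. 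Whether the Courr\`{e}ge structure alone closes this gap, or whether B\"{o}ttcher's original argument imposes or derives a slightly stronger regularity, is something you would need to check against \cite{Bottcher2013} directly.
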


As a corollary to this theorem, conditions \eqref{inhomocont} and \eqref{inhomobounded} give us just one family of generators and symbols to consider: $(\mcl{A}_s)_{s\geq0}$ and $(p_s(x,\xi))_{s\geq0}$. Throughout this chapter, we often assume that $p_s(x,\xi)$ is \textbf{$s$-continuous} and \textbf{bounded}, i.e. satisfying \eqref{inhomocont} and \eqref{inhomobounded}, respectively.

\par

Assume now a rich domain, i.e. $C_c^\infty(\R^d)\subset \mcl{D}(\mcl{A}_s)$ for all $s\geq0$, and $p_s(x,\xi)$ is $s$-continuous and bounded. By 1-1 correspondence between cndfs and the L\'{e}vy-Khintchine formula, we have the following representation:
\begin{equation}
\label{eq:symbolLK}
p_s(x,\xi) = i b_s(x) \cdot \xi - \frac{1}{2} \xi \cdot \Sigma_s(x) \xi + \int_{y\neq0} (e^{i \xi\cdot y} - 1 - i \xi \cdot y\chi(y)) \nu_s(x,dy),
\end{equation}
where, for each $s\geq0$, $b_s:\R^d\rightarrow\R^d$ represents the non-random behavior, $\Sigma_s:\R^d \rightarrow\R^{d\times d}$ is a symmetric positive definite matrix which represents the continuous behavior, and $\nu_s(x,\cdot)$ is \Levy measure on $\R^d$ for all $x\in\R^d$ which represents the jump behavior. Function $\chi:\R^d\rightarrow\R$ is called the \textbf{cut-off function}. Unless, we specify otherwise, in this paper, we set $\chi(y):=\mathbbm{1}_{(0,1)} (|y|)$. We call $(b_s(x), \Sigma_s(x), \nu_s(x,dy))$ the \textbf{(L\'{e}vy) characteristic triplet} of process $X$. We have for each $s\geq0$, an \textbf{integro-differential operator} $I(p_s)$ defined on $C_b^2(\R^d)$ by substituting the L\'{e}vy-Khintchine form in equation \eqref{eq:symbolLK} into \eqref{inhomo:pdo}, and, by elementary Fourier analysis, 
\begin{equation}
\label{inhomo:ido}
I(p_s) f(x) = b_s(x) \cdot \nabla f(x) + \frac{1}{2} \nabla \cdot \Sigma_s(x) \nabla f(x) + \int_{y\neq0} (f(x+y) - f(x) - \nabla f(x)\cdot y\chi(y))\nu_s(x,dy).
\end{equation}
$I(p_s)$ clearly extends $\mcl{A}_s$ onto $C_b^2(\R^d)$, i.e. $I(p_s)|_{\mcl{D}(\mcl{A}_s)} = \mcl{A}_s$. 

\par
Now, we wish to define what dependence means in these processes.

\subsubsection{Dependence, monotonicity in time-inhomogeneous Markov processes}

Let $C_b(\R^d)$ be the space of continuous, bounded functions, and let $\mcl{F}_i$ be the space of functions from $\R^d$ to $\R$ that are non-decreasing componentwise. (Note: we often don't specify the dimension of the domain of the functions in $\mcl{F}_i$. This is because we often intersect this space $\mcl{F}_i$ with other spaces in which we do specify the domain. For example, $C_b(\R^d)\cap \mcl{F}_i$ would mean that $\mcl{F}_i$ are non-decreasing functions on $\R^d$, whereas $C_b(\R^n)\cap \mcl{F}_i$ would mean that $\mcl{F}_i$ are non-decreasing functions on $\R^n$.)

\begin{definition}
\label{def:spatialassocinhomo}
{\rm Let $X = (X_t)_{t\geq0}$ be a time-inhomogeneous Markov process with Markov evolution $(T_{s,t})_{s\leq t}$. We say $X$ is \textbf{spatially associated} if for all $s\leq t$, $f,g\in C_b(\R^d) \cap \mcl{F}_i$, we have $T_{s,t} fg \geq T_{s,t} f \hspace{.1cm} T_{s,t} g.$
}
\end{definition}

\begin{definition}
\label{def:tempassocinhomo}
{\rm 
Let $X = (X_t)_{t\geq0}$ be a time-inhomogeneous Markov process. We say $X$ is \textbf{temporally associated} if for all $0\leq t_1<...<t_n$, $(X_{t_1},...,X_{t_n})$ is associated in $\R^{dn}$.
}
\end{definition}

\begin{remark}
\label{rem:spatassoc}
{\rm
\begin{enumerate}[noitemsep,label = (\roman*)]
\item The meaning of Definition \ref{def:spatialassocinhomo} can be interpreted as the following. For each $x\in\R^d$ and $s\leq t$, $X_t$ is an associated random vector conditioned on the event $\{X_s=x\}$, i.e. $\E [f(X_t)g(X_t)|X_s=x] \geq \E[f(X_t)|X_s=x] \cdot \E [g(X_t)|X_s=x].$
Such a definition is more useful in applications. For example, see \cite{Lindqvist1987} for an application in reliability theory. 

\item We can define other forms of positive dependence in time-inhomogeneous Markov processes if we replace ``associated" in Remark \ref{rem:spatassoc}(i) with ``WA," ``PSA," ``PSD," ``POD," ``PUOD," or ``PLOD." 

\item Our focus in this paper will be on spatial dependence. Lindqvist (1987) in \cite{Lindqvist1987} refers to light conditions which make Definition \ref{def:spatialassocinhomo} imply Definition \ref{def:tempassocinhomo}. We refer the reader to that paper.

\end{enumerate}
}
\end{remark}

Our interest will lie in Feller evolution processes which are stochastically monotone. For a general time-inhomogeneous Markov process, this is defined in the following way:

\begin{definition}
\label{def:stochmonoinhomo}
{\rm Let $X = (X_t)_{t\geq0}$ be a time-inhomogeneous Markov process with Markov evolution $(T_{s,t})_{s\leq t}$. We say $X$ is \textbf{stochastically monotone} if for all $s\leq t$, $f\in C_b(\R^d) \cap \mcl{F}_i$, we have $T_{s,t} f\in \mcl{F}_i$.
}
\end{definition}

There are few results in the literature, as far as the author can tell, regarding dependence structures in time-inhomogeneous Markov processes. There are, however, several useful results in the characterization of dependence structures in time-homogeneous Feller processes, most notably Mu-fa Chen (1993) \cite{Chen1993}, Jie Ming Wang (2009) \cite{Wang2009}, and Tu (2019a) \cite{Tu2018a}. Thus, to characterize positive dependence structures in Feller evolution processes, we can transform the time-inhomogeneous FEP into a time-homogeneous Feller process and apply results on Feller processes to answer questions about the FEP! We do this transformation following the prescription given by B\"{o}ttcher \cite{Bottcher2013}, and then use results from Tu  \cite{Tu2018a} to characterize the dependence structures in FEPs. We give an overview of B\"{o}ttcher's transformation in the following subsection and highlight some important results from his paper \cite{Bottcher2013}.




\subsection{Time-homogeneous transformation of a time-inhomogeneous Markov process}
\label{transformation}

For the sake of brevity, we will omit background on Feller processes and general time-homogeneous Markov processes. If the reader would like more background information on those topics, please see \cite{Tu2018a} or \cite{Schilling2013}.

\par

Time-homogeneous Markov processes have very nice properties and analytical tools. To take advantage of those tools in the time-inhomogeneous case, we can transform our time-inhomogeneous process $X$ into a time-homogeneous process $\tilde{X}$ by adding another (deterministic) component to the process. We will outline the transformation of $X$ to $\tilde{X}$ in this subsection. We follow the prescription used in B\"{o}ttcher \cite{Bottcher2013}. 

\par

Let $X=(X_t)_{t\geq0}$ be a time-inhomogeneous Markov process with sample space \\$(\Omega, \mcl{G}, (\mcl{G}_t)_{t\geq0}, \dP)$, state space $(\R^d, \mcl{B}(\R^d))$, and Markov evolution $(T_{s,t})_{s\leq t}$, and corresponding Markov kernels $(P_{s,t})_{s\leq t}$ defined by 
\begin{equation*}
P_{s,t}(x,A) := T_{s,t}\mathbbm{1}_A(x).
\end{equation*}

We define a transformed process in the following manner.

\begin{prescription}
\label{pre1}
{\rm To define the \underline{new sample space}, let $\tilde{\Omega} := \R_+ \times \Omega$, where elements $\tilde{\omega} = (s,\omega)$, with $s\geq0$, $\omega\in\Omega$. 
The $\sigma$-algebra will be $\tilde{\mcl{G}}$, defined by 
\begin{equation*}
\tilde{\mcl{G}} = \{A\subset \tilde{\Omega}: A_s \in \mcl{G}, \hspace{.2cm} \forall s\geq0\}, 
\end{equation*}
where $A_s:=\{\omega\in\Omega: (s,\omega)\in A\}$. The \underline{new state space} will be defined to be $\R_+\times\R^d$ with $\sigma$-algebra $\tilde{\mcl{B}}$ defined by 
\begin{equation*}
\tilde{\mcl{B}} = \{B\subset \R_+\times\R^d: B_s \in \mcl{B}(R^d), \hspace{.2cm} \forall s\geq0\},
\end{equation*}
where $B_s:=\{x\in\R^d: (s,x)\in B\}$. From this, we can define a \underline{new process} $\tilde{X} = (\tilde{X}_t)_{t\geq0}$ on $\R_+\times \R^d$ by the prescription
\begin{equation*}
\tilde{X}_t(\tilde{\omega}) = (s+t, X_{s+t}(\omega)),
\end{equation*}
where $\tilde{\omega} = (s,\omega)$. The family of  probability measures $(\tilde{\dP}^{\tilde{x}})_{\tilde{x}\in\R_+\times\R^d}$ is given by 
\begin{equation*}
\tilde{\dP}^{\tilde{x}}(A|\tilde{X}_0 = \tilde{x})= \tilde{\dP}^{\tilde{x}}(A|\tilde{X}_0 = (s,x)) = \dP(A_s|X_s=x), \hspace{.5cm} A\in\tilde{\mcl{G}}.
\end{equation*}
From this we can define the transition kernel $(\tilde{P}_t)_{t\geq0}$  by 
\begin{equation}
\label{transformedkernel}
\tilde{P}_t(\tilde{x}, B) : = \tilde{\dP}^{\tilde{x}}(\tilde{X}_t\in B| \tilde{X}_0 = \tilde{x}) = \dP(X_{s+t} \in B_{s+t}|X_s=x), \hspace{.5cm} B\in \tilde{\mcl{B}}.
\end{equation}
Thus, this prescription has given us a process $\tilde{X}=(\tilde{X}_t)_{t\geq0}$ with sample space \\ $(\tilde{\Omega}, \tilde{\mcl{G}},  \tilde{\dP}^{\tilde{x}})_{\tilde{x}\in\R_+\times \R^d}$, where $\tilde{x}$ represents the starting point of process $\tilde{X}$, i.e. $\tilde{\dP}^{\tilde{x}} (\tilde{X}_0 =\tilde{x})=1$, and state space $(\R_+ \times \R^d, \tilde{\mcl{B}})$. 
}
 \end{prescription}
 
 \par
 
 The process $\tilde{X}$ is a time-homogeneous Markov process, with transition semigroup  $(\tilde{T}_t)_{t\geq0}$ on $(B_b(\R_+\times\R^d), ||\cdot||_\infty)$, given by
 \begin{equation}
 \label{eq:semigroup}
\tilde{T}_t f(\tilde{x}) = \tilde{\E}^{\tilde{x}} f(\tilde{X}_t) = \E(f_{s+t}(X_{s+t}) | X_s=x) = T_{s,s+t} f_{s+t}(x),
\end{equation}
where $f_{s+t}:\R^d\rightarrow\R$ is defined by $f_{s+t}(x): = f(s+t,x)$ (See B\"ottcher \cite{Bottcher2013}). 
  
 


\par 
When given a time-inhomogeneous Markov process $X$ on sample space $(\Omega, \mcl{G},(\mcl{G}_t)_{t\geq0},\dP)$ and state space $(\R^d, \mcl{B}(\R^d))$, we call the  process $\tilde{X} = (\tilde{X}_t)_{t\geq0}$ on  sample space \\$(\tilde{\Omega}, \tilde{\mcl{G}},(\tilde{\mcl{G}}_t)_{t\geq0}, \tilde{\dP}^{\tilde{x}})_{\tilde{x}\in\R_+\times \R^d}$, state space $(\R_+ \times \R^d, \tilde{\mcl{B}})$, and semigroup $(\tilde{T}_t)_{t\geq0}$ given by Prescription \ref{pre1}  the \textbf{transformed process of $X$}. 


This transformed process $\tilde{X}$ has many nice properties and representations.  If $X$ is a rich FEP on $\R^d$ with FES $(T_{s,t})_{s\leq t}$, generators $(\mcl{A}_s, \mcl{D}(\mcl{A}_s))_{s\geq0}$, bounded and $s$-continuous symbols $p_s(x,\xi)$, characteristic triplets $(b_s(x), \Sigma_s(x), \nu_s(x,dy)$, extended generators $(I(p_s), C_b^2(\R^d))_{s\geq0}$,  we have that $\tilde{X}$ is a Feller process with  Feller semigroup  $(\tilde{T}_t)_{t\geq0}$, generator $(\tilde{\mcl{A}}, \mcl{D}(\tilde{\mcl{A}}))$, symbol $\tilde{p}(\tilde{x},\tilde{\xi})$, characteristic triplet $(\tilde{b}(\tilde{x}), \tilde{\Sigma}(\tilde{x}), \tilde{\nu}(\tilde{x}, d\tilde{y}))$, and extended generator (integro-differential operator) $(I(\tilde{p}), C_b^2(\R_+ \times \R^d))$. These objects have the following representations:

$\tilde{b}:\R^{d+1}\rightarrow \R^{d+1}$ defined by
\begin{equation}
\label{eq:b}
\tilde{b}(\tilde{x})  = \tilde{b}(s,x)  = (1,b_s(x)),
\end{equation}
$\tilde{\Sigma}:\R^{d+1}\rightarrow \R^{d+1\times d+1}$ defined by 
\begin{equation}
\begin{split}
\tilde{\Sigma}^{i0}(\tilde{x}) & = 0, \hspace{.2cm} \forall j=0,...,d
\\ \tilde{\Sigma}^{0j}(\tilde{x}) & = 0, \hspace{.2cm} \forall i=0,...,d
\\ \tilde{\Sigma}^{ij}(\tilde{x}) & = \Sigma_s^{ij}(x) \hspace{.2cm} \forall i,j=1,...,d
\end{split}
\end{equation}
and $\tilde{\nu}(\tilde{x}, d\tilde{y})$ is a \Levy measure on $\mcl{B}(\R^{d+1}\setminus\{0\})$ given by  
\begin{equation}
\tilde{\nu}(\tilde{x},d\tilde{y}) = \nu_s(x,dy)\delta_0(dr),
\end{equation} 
where $\tilde{y} = (r,y)$, and $\delta_0$ is Dirac measure at $0$. 

Symbol $\tilde{p}(\tilde{x},\tilde{\xi}): \R^{d+1}\times \R^{d+1} \rightarrow\C$ is given by 
\begin{equation}
\tilde{p} (\tilde{x}, \tilde{\xi})  = i\tilde{b}(\tilde{x}) \cdot \tilde{\xi} -  \frac{1}{2} \tilde{\xi} \cdot \tilde{\Sigma}(\tilde{x})\tilde{\xi}
 + \int_{\tilde{y}\neq0} (e^{i\tilde{\xi}\cdot\tilde{y}} - 1 - i \tilde{\xi}\cdot \tilde{y}\chi(\tilde{y}))\tilde{\nu}(\tilde{x},d\tilde{y})
\end{equation}
or 
\begin{equation}
\tilde{p} (\tilde{x}, \tilde{\xi}) = ir + p_s(x,\xi), \hspace{.5cm} \tilde{x} = (s,x), \hspace{.2cm} \tilde{\xi} = (r,\xi).
\end{equation}

Let $f\in C_b^2(\R_+\times\R^d)$, where $f=f(\tilde{x}) = f(s,x)$. Define $f_s(x) := f(s,x) \in C_b^2(\R^d)$ (where $s$ is fixed). Extended generator $I(\tilde{p})$ is an extension of $\mcl{A}$, i.e. $I(\tilde{p})|_{\mcl{D}(\mcl{\tilde{A}})} = \mcl{\tilde{A}}$, and is given by 
\begin{equation}
\begin{split}
I(\tilde{p}) f(\tilde{x}) &= \tilde{b}(\tilde{x}) \cdot \nabla f(\tilde{x}) +  \frac{1}{2} \nabla \cdot \tilde{\Sigma}(\tilde{x}) \nabla f(\tilde{x})
 + \int_{\tilde{y}\neq0} (f(\tilde{x} + \tilde{y}) - f(\tilde{x}) - \nabla f(\tilde{x})\cdot \tilde{y}\chi(\tilde{y}))\tilde{\nu}(\tilde{x},d\tilde{y})
\end{split}
\end{equation} 
or 
\begin{equation}
I(\tilde{p}) f(\tilde{x}) = \frac{\partial}{\partial s} f(s,x) + I(p_s) f_s(x).
\end{equation}

An additional nice property of the symbol $\tilde{p}(\tilde{x},\tilde{\xi})$ is that if $C_c^\infty(\R^d)$ is a core of $\mcl{A}_s$, then $\tilde{p}(\tilde{x},\tilde{\xi})$ is a bounded symbol, i.e.  there exists $C>0$ such that
\begin{equation}
\label{eq:bddsymb}
\sup_{\tilde{x}\in\R_+\times\R^d} |\tilde{p}(\tilde{x},\tilde{\xi})| \leq C(1+|\tilde{\xi}|^2), \hspace{.3cm} \text{ for all } \hspace{.1cm}\tilde{\xi}\in\R_+\times\R^d.
\end{equation}
For proofs and more details of this property and these formulas, see B\"{o}ttcher \cite[Thm. 3.2, 3.3, Cor. 3.5, Lem. 3.7]{Bottcher2013} and Tu \cite[Ch. 4]{TuThesis}

\section{Main results}
\label{sec:mainresults}

\subsection{Association of FEPs}

We give a characterization of spatial association for Feller evolution processes based on the extended generators $I(p_s)$. We apply this to characterize spatial association of such processes of the jump variety, i.e. $(b_s(x), 0, \nu_s(x,dy))$. These results are given in Theorems \ref{thm:inhomoliggett} and \ref{thm:inhomoassocjump}. We first need the following useful lemmas from \cite{Tu2018a} about (time-homogeneous) Feller processes.

\begin{lemma}[Theorem 3.2 of Tu  (2019a) \cite{Tu2018a}]
\label{extliggett}
Let $Y=(Y_t)_{t\geq0}$ be a Feller processes in $\R^n$ (with rich domain) with  a stochastically monotone transition semigroup $(T_t)_{t\geq0}$, a generator $(\mcl{A},\mcl{D}(\mcl{A}))$, bounded symbol $p(x,\xi)$, and an integro-differential operator $I(p)$. Assume $x\mapsto p(x,0)$ is continuous. Then 
\begin{equation*}
\label{eq:extliggett}
I(p) fg \geq f I(p) g + g I(p) f, \hspace{.5cm} \forall f,g\in C_b^2(\R^n)\cap \mcl{F}_i
\end{equation*}
\noindent if and only if 
\begin{equation*}
\label{semiass}
\forall t\geq0,\hspace{.5cm} T_t fg \geq T_t f \cdot  T_t g,  \hspace{.5cm} \forall f,g\in C_b(\R^n)\cap\mcl{F}_i.
\end{equation*}
\end{lemma}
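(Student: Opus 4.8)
The plan is to prove the two implications separately; the direction ``semigroup association $\Rightarrow$ generator inequality'' is the easy one, while the converse requires an interpolation argument in the style of Liggett.

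For the easy direction, fix $f,g\in C_b^2(\R^n)\cap\mcl{F}_i$. Since $f,g$ are bounded, the product $fg$ again lies in $C_b^2(\R^n)$, so $I(p)(fg)$, $I(p)f$, $I(p)g$ all make sense, and one has the pointwise Dynkin-type identities $\frac{d}{dt}^{+}\big|_{t=0}T_t h = I(p)h$ for $h\in C_b^2(\R^n)$, together with the product rule $\frac{d}{dt}^{+}\big|_{t=0}\big(T_tf\cdot T_tg\big) = (I(p)f)\,g + f\,(I(p)g)$, the latter being valid because $t\mapsto T_tf$ is right-differentiable at $0$ with derivative $I(p)f$. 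By hypothesis $T_t(fg)(x) - T_tf(x)\,T_tg(x)\ge 0$ for all $t\ge 0$ and all $x$; dividing by $t$ and letting $t\downarrow 0$ yields $I(p)(fg) \ge f\,I(p)g + g\,I(p)f$ pointwise.

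For the converse, fix $t>0$ and $f,g\in C_b^2(\R^n)\cap\mcl{F}_i$ (adding constants, which is harmless when the semigroup is conservative, lets one assume $f,g\ge 0$, convenient for the final approximation step). Define, for $0\le s\le t$,
\[ \phi(s) := T_s\big((T_{t-s}f)(T_{t-s}g)\big), \]
so $\phi(0) = (T_tf)(T_tg)$ and $\phi(t) = T_t(fg)$; the goal is to show $\phi$ is non-decreasing. Differentiating, using $\frac{d}{ds}T_s = T_sI(p)$, $\frac{d}{ds}T_{t-s} = -I(p)T_{t-s}$, and the product rule for $s\mapsto (T_{t-s}f)(T_{t-s}g)$, gives
\[ \phi'(s) = T_s\Big[\,I(p)\big((T_{t-s}f)(T_{t-s}g)\big) - (I(p)T_{t-s}f)(T_{t-s}g) - (T_{t-s}f)(I(p)T_{t-s}g)\,\Big]. \]
Because $(T_t)_{t\ge 0}$ is stochastically monotone, $T_{t-s}f$ and $T_{t-s}g$ lie in $\mcl{F}_i$; granting that they also lie in $C_b^2(\R^n)$, the generator inequality applied to the pair $T_{t-s}f, T_{t-s}g$ makes the bracketed expression nonnegative, and since $T_s$ is positivity-preserving, $\phi'(s)\ge 0$. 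Hence $T_t(fg) = \phi(t)\ge\phi(0) = (T_tf)(T_tg)$. Finally I would extend this inequality from $f,g\in C_b^2(\R^n)\cap\mcl{F}_i$ to $f,g\in C_b(\R^n)\cap\mcl{F}_i$ by approximating each function by a uniformly bounded sequence of non-decreasing $C^2$ functions converging pointwise (e.g.\ mollification, which preserves monotonicity) and passing to the limit under $T_t$ via dominated convergence against the kernel $P_t(x,\cdot)$.

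The main obstacle is the regularity point flagged above: one must justify that $T_{t-s}f$ and $T_{t-s}g$ are smooth enough (in $C_b^2(\R^n)$, or at least in a class where $I(p)$ and the assumed inequality apply), and that $\phi$ is genuinely differentiable with the displayed derivative, with all interchanges of $T_s$ and $\frac{d}{ds}$ justified by strong continuity. I would handle this either by exploiting the boundedness and ($x$-)continuity hypotheses on the symbol $p$ — which control $I(p)$ and let one show $T_t$ maps $C_b^2(\R^n)$ into itself up to approximation — or, more robustly, by replacing $\phi$ with a mollified/regularized version, proving monotonicity there, and then removing the regularization in a limit; this is essentially the same limiting machinery already needed for the passage from $C_b^2$ to $C_b$, so both loose ends can be tied off together.
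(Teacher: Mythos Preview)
The paper does not prove this lemma; it is quoted as Theorem~3.2 of \cite{Tu2018a} and used as a black box, so there is no in-paper proof to compare against directly.

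That said, your outline is the classical Liggett interpolation and is correct in spirit. From the way the present paper proves the closely related Theorems~\ref{thm:inhomoliggett} and~\ref{thm:tufellercomp}, and from the presence of the companion Lemma~\ref{extcauchy} (also quoted from \cite{Tu2018a}), it is clear that the cited proof packages the argument differently: one sets $F(t)=T_t(fg)-T_tf\cdot T_tg$ and $G(t)=F'(t)-I(p)F(t)$, invokes Lemma~\ref{extcauchy} to obtain $F(t)=\int_0^t T_{t-r}G(r)\,dr$, and then shows $G(r)\ge 0$. This is algebraically the same as your interpolation --- your $\phi'(s)$ is essentially $T_s$ applied to the bracketed quantity, which is $G$ evaluated along the path --- but the Cauchy-problem formulation concentrates all the regularity requirements into the hypotheses of Lemma~\ref{extcauchy} (namely $F(t)\in\mcl{D}(I(p))$ and continuity of $G$) rather than leaving them as a pointwise differentiability check on $\phi$.

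The regularity obstacle you flag is genuine and is indeed the whole technical content of the result: a general Feller semigroup need not map $C_b^2(\R^n)$ into itself, so one cannot simply assert $T_{t-s}f,T_{t-s}g\in C_b^2(\R^n)\cap\mcl{F}_i$. Your proposed fix (mollify, prove the inequality for regularized objects, pass to the limit) is the standard route and is consistent with how extended-generator arguments are made rigorous; the boundedness of the symbol and continuity of $x\mapsto p(x,0)$ are precisely the hypotheses that make such approximations controllable. Neither your sketch nor the present paper spells out those details --- they live in \cite{Tu2018a}.
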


\begin{lemma}[Lemma 3.3 of Tu (2019a) \cite{Tu2018a}]
\label{extcauchy}
Let $(\mcl{A},\mcl{D}(\mcl{A}))$ be a (rich) Feller generator of a Feller semigroup $(T_t)_{t\geq0}$ with bounded symbol $p(x,\xi)$ satisfying $x\mapsto p(x,0)$ continuous Let $I(p)$ be the extended generator on $C_b^2(\R^n)$. Suppose $F,G:[0,\infty)\rightarrow C_b(\R^n)$ such that 

\mypar

\noindent (a) $F(t)\in\mcl{D}(I(p))$ for all $t\geq0$
\\
(b) $G(t)$ is continuous on $[0,\infty)$ (locally uniformly)
\\ 
(c) $F'(t) = I(p) F(t) + G(t)$ for all $t\geq0$.
\mypar

\noindent Then $\displaystyle F(t) = T_t F(0) + \int_0^t T_{t-s} G(s) ds.$
\end{lemma}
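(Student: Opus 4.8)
The plan is to prove the identity $F(t) = T_t F(0) + \int_0^t T_{t-s} G(s)\,ds$ by the standard variation-of-constants argument, which works in this setting because $(T_t)_{t\geq0}$ is a Feller semigroup and $I(p)$ agrees with its generator $\mcl{A}$ on $\mcl{D}(\mcl{A})$. Fix $t>0$ and define the auxiliary function $\Phi:[0,t]\to C_b(\R^n)$ by $\Phi(r) := T_{t-r} F(r)$. The core of the argument is to show that $\Phi$ is differentiable on $[0,t]$ with $\Phi'(r) = T_{t-r} G(r)$; then integrating from $0$ to $t$ and using $\Phi(0)=T_t F(0)$, $\Phi(t)=F(t)$ yields the claim.

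To compute $\Phi'(r)$ I would write the difference quotient
\[
\frac{\Phi(r+h)-\Phi(r)}{h} = T_{t-r-h}\,\frac{F(r+h)-F(r)}{h} + \frac{T_{t-r-h}-T_{t-r}}{h}\,F(r),
\]
and handle the two terms separately. For the first term, hypothesis (c) gives $\frac{F(r+h)-F(r)}{h} \to F'(r) = I(p)F(r) + G(r)$ in $C_b(\R^n)$; combined with the uniform boundedness (contractivity) of $T_{t-r-h}$ and the strong continuity $T_{t-r-h}\to T_{t-r}$, the first term converges to $T_{t-r}\big(I(p)F(r)+G(r)\big)$. For the second term, since $F(r)\in\mcl{D}(I(p))$ and $I(p)$ extends $\mcl{A}$ — and here I would invoke that on the relevant domain $I(p)$ acts as the Feller generator, so $t\mapsto T_t F(r)$ is differentiable with derivative $-\mcl{A}T_{t-r}F(r) = -T_{t-r}\mcl{A}F(r) = -T_{t-r} I(p)F(r)$ — the second term converges to $-T_{t-r} I(p) F(r)$. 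Adding the two limits, the $I(p)F(r)$ contributions cancel and $\Phi'(r) = T_{t-r} G(r)$, as desired. One must also check continuity of $r\mapsto T_{t-r}G(r)$ on $[0,t]$, which follows from hypothesis (b) (local uniform continuity of $G$) together with strong continuity and contractivity of the semigroup, so that the integral $\int_0^t T_{t-s}G(s)\,ds$ exists as a Riemann (Bochner) integral in $C_b(\R^n)$.

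The main obstacle I anticipate is justifying the differentiation of $r\mapsto T_{t-r}F(r)$ cleanly — specifically, interchanging the semigroup with the derivative and making sure the "generator acts on $F(r)$" step is legitimate, since $I(p)$ is a priori only the integro-differential \emph{extension} of $\mcl{A}$ on $C_b^2$ rather than the generator on $\mcl{D}(\mcl{A})$. The resolution is that hypothesis (a), $F(t)\in\mcl{D}(I(p))$, together with the structural facts recorded before the lemma (boundedness of the symbol, $I(p)|_{\mcl{D}(\mcl{A})}=\mcl{A}$, and the relation $\frac{d}{dt}T_t = T_t\mcl{A} = \mcl{A}T_t$ on the appropriate domain) is exactly what lets the computation go through; one either argues directly that $T_{t-r-h}F(r)$ lies in a space where these identities apply, or appeals to the corresponding statement already established for Feller generators in \cite{Tu2018a}. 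The remaining estimates — controlling error terms via $\|T_u\|\leq 1$ and uniform convergence of difference quotients — are routine.
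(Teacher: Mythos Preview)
The paper does not prove this lemma: it is quoted verbatim as Lemma~3.3 of \cite{Tu2018a} and used as a black box, so there is no ``paper's own proof'' to compare against. Your outline is the standard Duhamel/variation-of-constants argument and is the expected route; the decomposition of the difference quotient of $\Phi(r)=T_{t-r}F(r)$ and the cancellation of the $I(p)F(r)$ terms are exactly right in spirit.

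That said, the obstacle you flag is the entire content of the lemma, and your proposed resolution does not close it. The second term requires
\[
\frac{T_{t-r-h}-T_{t-r}}{h}\,F(r)\;\longrightarrow\;-T_{t-r}\,I(p)F(r),
\]
but this is the statement that $t\mapsto T_tF(r)$ is differentiable with derivative $T_tI(p)F(r)$ for $F(r)\in C_b^2(\R^n)$, not merely for $F(r)\in\mcl{D}(\mcl{A})\subset C_0(\R^n)$. The Feller semigroup is a priori only strongly continuous on $C_0$, and $I(p)|_{\mcl{D}(\mcl{A})}=\mcl{A}$ does not by itself give you the derivative formula on the larger space $C_b^2$. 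Your two suggested fixes are either vague (``argues directly that $T_{t-r-h}F(r)$ lies in a space where these identities apply'') or circular (``appeals to the corresponding statement already established \ldots\ in \cite{Tu2018a}'' --- that \emph{is} the lemma). What is actually needed, and what the hypotheses ``bounded symbol'' and ``$x\mapsto p(x,0)$ continuous'' are there for, is an argument that the semigroup extends to $C_b$ (or at least that the Kolmogorov forward equation $\frac{d}{dt}T_tf=T_tI(p)f$ holds for $f\in C_b^2$), typically via an approximation of $f\in C_b^2$ by $C_c^\infty$ functions together with control of $I(p)$ on the approximants coming from the symbol bound. Without supplying that step, the proof plan remains a sketch of the easy part.
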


\begin{lemma}[Theorem 3.3 of  Tu (2019a) \cite{Tu2018a}]
\label{PODnec}
Let $X = (X_t)_{t\geq0}$ be a rich Feller process in $\R^d$ with symbol $p(x,\xi)$ and triplet $(b(x), 0, \nu(x,dy))$. Then $X_t$ is  PUOD for each $t\geq0$ implies condition (\ref{resnickx}): $$\nu(x,(\R_+^d\cup \R_-^d)^c)=0, \hspace{.5cm} \forall x\in\R^d.$$
\end{lemma}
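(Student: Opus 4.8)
The plan is to reduce the problem to pairs of coordinates and then extract, from PUOD at small times, an infinitesimal (carr\'e-du-champ) inequality for the extended generator.

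First, suppose $X_t$ is PUOD for every $t\geq0$, interpreted (as in Remark \ref{rem:spatassoc}) as: for every $x\in\R^d$, the vector $X_t$ conditioned on $\{X_0=x\}$ is PUOD. Fix indices $i\neq j$ and a point $x\in\R^d$. Using the product formulation of PUOD in Definition \ref{def:pod} with all coordinate functions except the $i$-th and $j$-th taken to be $1$, we obtain, for all bounded nondecreasing $\phi,\psi\in C_b^2(\R)$ with $\phi,\psi\geq0$,
$$T_t(\bar f\bar g)(x)\ \geq\ T_t\bar f(x)\,T_t\bar g(x),\qquad \bar f(z):=\phi(z_i),\quad \bar g(z):=\psi(z_j),$$
so that $\Phi(t):=T_t(\bar f\bar g)(x)-T_t\bar f(x)\,T_t\bar g(x)$ satisfies $\Phi(t)\geq0$ for all $t\geq0$ and $\Phi(0)=0$. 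Note that $\bar f,\bar g,\bar f\bar g\in C_b^2(\R^d)$.

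Second, I would differentiate $\Phi$ at $t=0^+$. For a rich Feller process every $h\in C_b^2(\R^d)$ lies in $\mcl{D}(I(p))$ and satisfies Dynkin's formula $T_th(x)=h(x)+\int_0^t T_sI(p)h(x)\,ds$ (the same $C_b^2$ functional calculus used in Lemmas \ref{extliggett}--\ref{extcauchy}). Writing $T_t\bar f(x)T_t\bar g(x)-\bar f(x)\bar g(x)=\big(T_t\bar f(x)-\bar f(x)\big)T_t\bar g(x)+\bar f(x)\big(T_t\bar g(x)-\bar g(x)\big)$, dividing $\Phi(t)$ by $t$, and using the continuity of $s\mapsto T_sh(x)$ at $s=0$, we get
$$\lim_{t\to0^+}\frac{\Phi(t)}{t}=I(p)(\bar f\bar g)(x)-\bar f(x)\,I(p)\bar g(x)-\bar g(x)\,I(p)\bar f(x).$$
Because the triplet is $(b(x),0,\nu(x,dy))$, the first-order drift term of $I(p)$ cancels by the Leibniz rule and there is no diffusion term, so the integro-differential form \eqref{inhomo:ido} collapses the right-hand side to $\int_{y\neq0}\big(\phi(x_i+y_i)-\phi(x_i)\big)\big(\psi(x_j+y_j)-\psi(x_j)\big)\,\nu(x,dy)$ (the gradient-correction terms cancel as well). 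Since $\Phi\geq0$ and $\Phi(0)=0$, this integral is $\geq0$ for every admissible $\phi,\psi$.

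Third, I would choose the test functions to isolate the mixed-sign part of $\nu(x,\cdot)$. Given $\varepsilon>0$, pick a nondecreasing $\phi\in C_b^2(\R)$ with $\phi\equiv\phi(x_i)$ on $(-\infty,x_i+\varepsilon]$ and $\phi>\phi(x_i)$ on $(x_i+\varepsilon,\infty)$, and a nondecreasing $\psi\in C_b^2(\R)$ with $\psi\equiv\psi(x_j)$ on $[x_j-\varepsilon,\infty)$ and $\psi<\psi(x_j)$ on $(-\infty,x_j-\varepsilon)$ (adding constants to make them nonnegative changes neither the increments nor $I(p)$). Then $\phi(x_i+y_i)-\phi(x_i)\geq0$ with strict inequality exactly when $y_i>\varepsilon$, and $\psi(x_j+y_j)-\psi(x_j)\leq0$ with strict inequality exactly when $y_j<-\varepsilon$, so the integrand above is $\leq0$ on all of $\{y\neq0\}$ and strictly negative on $\{y:y_i>\varepsilon,\ y_j<-\varepsilon\}$. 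An integrand that is $\leq0$ yet has nonnegative integral must vanish $\nu(x,\cdot)$-a.e., so $\nu\big(x,\{y_i>\varepsilon,\,y_j<-\varepsilon\}\big)=0$; letting $\varepsilon\downarrow0$ gives $\nu\big(x,\{y_i>0,\,y_j<0\}\big)=0$. Since $\big(\R_+^d\cup\R_-^d\big)^c=\bigcup_{i\neq j}\{y:y_i>0,\ y_j<0\}$ is a finite union, $\nu\big(x,(\R_+^d\cup\R_-^d)^c\big)=0$, and as $x$ was arbitrary this is exactly \eqref{resnickx}.

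The main obstacle is the rigorous justification of the differentiation step: one needs $C_b^2(\R^d)\subset\mcl{D}(I(p))$ together with Dynkin's representation and enough regularity of $t\mapsto T_th(x)$ (continuity at $0$, local boundedness of $s\mapsto T_sI(p)h(x)$) to pass to the limit in $\Phi(t)/t$ and to conclude $\Phi'(0^+)\geq0$. This is precisely the $C_b^2$ functional calculus for rich Feller processes underlying Lemmas \ref{extliggett}--\ref{extcauchy}, which I would cite. The remaining points -- the Leibniz cancellation of the drift and gradient-correction terms, and the existence of the monotone $C_b^2$ test functions with the prescribed flat pieces -- are routine.
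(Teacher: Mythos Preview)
The paper does not actually give its own proof of this lemma; it is quoted as Theorem~3.3 of \cite{Tu2018a} and used as a black box in the proof of Theorem~\ref{thm:inhomoPUODnec}. So there is no in-paper proof to compare against directly.

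That said, your argument is correct and is the expected one. The closest analogue in the present paper is the $(\Rightarrow)$ direction of Theorem~\ref{thm:inhomoassocjump}, which carries out exactly your third step (the contradiction via monotone $C_b^\infty$ test functions with flat pieces near $x$) in the time-inhomogeneous setting, starting however from association rather than PUOD. Your contribution is to show that the weaker PUOD hypothesis already suffices: by taking all but two of the coordinate functions in Definition~\ref{def:pod} equal to $1$, you reduce to a two-coordinate covariance inequality, and then your differentiation at $t=0^+$ replaces the appeal to Theorem~\ref{thm:inhomoliggett}/Lemma~\ref{extliggett} (which needed the full association hypothesis) by a direct carr\'e-du-champ computation. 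This is precisely why the remark following Lemma~\ref{PODnec} emphasizes that stochastic monotonicity is not needed here. The differentiation step you flag as the main obstacle is indeed the substantive point, and it rests on the same $C_b^2$ extended-generator calculus underlying Lemmas~\ref{extliggett}--\ref{extcauchy}; citing that framework is appropriate.
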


\begin{remark}
{\rm 
Observe that in Lemma \ref{PODnec}, we did not assume stochastic monotonicity, since we do not need that assumption for the proof (see \cite[Thm. 3.3]{Tu2018a}). But in order for condition (\ref{resnickx}) to be \textit{equivalent} to spatial PUOD, then we need the assumption of stochastic monotonicity. To understand why, see our paper \cite[Thm. 3.1]{Tu2018a}, which shows that under stochastic monotonicity, a jump-Feller process is associated if and only if condition (\ref{resnickx}) is satisfied. Hence, by Proposition \ref{propdepmap}, which says association implies PUOD (and all other dependence structures mentioned in this paper), and Lemma \ref{PODnec}, we have that a stochastically monotone jump-Feller process is PUOD if and only if condition (\ref{resnickx}) is satisfied (see \cite[Cor. 3.1]{Tu2018a}).
}
\end{remark}

Now we state and prove the main theorems of this paper, which can found in Theorems \ref{thm:inhomoliggett}, \ref{thm:inhomoassocjump}, \ref{thm:inhomoPUODnec}.

\begin{theorem}
\label{thm:inhomoliggett}
Let $X=(X_t)_{t\geq0}$ be a Feller evolution process with Feller evolution system $(T_{s,t})_{s\leq t}$, generators $(\mcl{A}_s)_{s\geq0}$ with rich domains, and that $C_c^\infty(\R^d)$ is the core for $\mcl{A}_s$, for all $s\geq0$. Let the corresponding symbols $p_s(x,\xi)$ be $s$-continuous and bounded, 
and $I(p_s)$ be the integro-differential operator (extended generator) of $X$. If $X$ is stochastically monotone, then $X$ is spatially associated if and only if
\begin{equation*}
I(p_s) fg \geq  f I(p_s) g + g I(p_s) f, \hspace{.5cm} \forall f,g\in C_b^2(\R^d)\cap\mcl{F}_i, \hspace{.1cm} s\geq0.
\end{equation*}
\end{theorem}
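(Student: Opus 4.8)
The plan is to homogenize and then quote Tu's machinery. First I would pass from $X$ to its B\"ottcher transform $\tilde X$ on $\R_+\times\R^d$ (Prescription \ref{pre1}), which by the discussion in Section \ref{transformation} is a rich Feller process with bounded symbol $\tilde p(\tilde x,\tilde\xi)=ir+p_s(x,\xi)$ (boundedness via \eqref{eq:bddsymb}, which is exactly where the core hypothesis on $\mcl{A}_s$ enters), extended generator $I(\tilde p)f(\tilde x)=\partial_s f(s,x)+I(p_s)f_s(x)$, and $\tilde p(\tilde x,0)=p_s(x,0)=0$. I would then read everything through the identity $\tilde T_t f(s,x)=T_{s,s+t}f_{s+t}(x)$ coming from \eqref{eq:semigroup}.

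The translation step: spatial association of $X$ is equivalent to the semigroup inequality $\tilde T_t FG\ge \tilde T_t F\cdot\tilde T_t G$ for $\tilde X$, where $F,G$ range over bounded continuous functions on $\R_+\times\R^d$ that are non-decreasing in the last $d$ coordinates. Indeed, for such $F,G$ the slices $F_{s+t},G_{s+t}$ lie in $C_b(\R^d)\cap\mcl{F}_i$, so $\tilde T_t(FG)(s,x)=T_{s,s+t}(F_{s+t}G_{s+t})(x)\ge T_{s,s+t}F_{s+t}(x)\,T_{s,s+t}G_{s+t}(x)=\tilde T_t F(s,x)\,\tilde T_t G(s,x)$ is precisely the spatial-association inequality of $X$ between times $s$ and $s+t$ for the pair $(F_{s+t},G_{s+t})$; conversely, restricting to $F,G$ independent of the time coordinate recovers $T_{s,t}fg\ge T_{s,t}f\,T_{s,t}g$ for arbitrary $f,g\in C_b(\R^d)\cap\mcl{F}_i$ and $s\le t$. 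The same bookkeeping applied to $I(\tilde p)f(\tilde x)=\partial_s f(s,x)+I(p_s)f_s(x)$ shows, the first-order terms cancelling by the product rule, that
\[
\big(I(\tilde p)(FG)-F\,I(\tilde p)G-G\,I(\tilde p)F\big)(s,x)=\big(I(p_s)(F_sG_s)-F_s\,I(p_s)G_s-G_s\,I(p_s)F_s\big)(x),
\]
so the generator inequality for $I(\tilde p)$ over $C_b^2$-functions non-decreasing in the spatial coordinates is equivalent to $I(p_s)fg\ge f I(p_s)g+g I(p_s)f$ for all $s\ge0$ and $f,g\in C_b^2(\R^d)\cap\mcl{F}_i$.

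It then suffices to establish, for $\tilde X$, the equivalence of this generator inequality with the above semigroup inequality, which is exactly Lemma \ref{extliggett} applied to $\tilde X$ --- provided $\tilde X$ is stochastically monotone. Here lies the one genuinely delicate point, and the non-trivial use of \cite{Tu2018a}: $\tilde X$ is \emph{not} stochastically monotone for the full coordinatewise order on $\R^{d+1}$, since the deterministic, possibly anti-monotone drift in the adjoined time coordinate destroys monotonicity there (e.g.\ $\partial_s T_{s,s+t}f$ need not be $\ge 0$). The fix is to run the whole argument with respect to the closed partial order $\preceq$ on $\R_+\times\R^d$ under which $(s,x)\preceq(s',x')$ only when $s=s'$ and $x\le x'$: the $\preceq$-non-decreasing functions are precisely those appearing in the translation step above, and $\tilde X$ \emph{is} $\preceq$-stochastically monotone, immediately from $\tilde T_t F(s,x)=T_{s,s+t}F_{s+t}(x)$ and the stochastic monotonicity of $X$. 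The Liggett-type differential-inequality proof of Lemma \ref{extliggett} --- built around $u\mapsto \tilde T_u\big[(\tilde T_{t-u}F)(\tilde T_{t-u}G)\big]$, the extended generator $I(\tilde p)$ on $C_b^2$, positivity-preservation, and the Cauchy-problem Lemma \ref{extcauchy} --- uses ``stochastic monotonicity'' only through the assertion that $\tilde T_{t-u}F$, $\tilde T_{t-u}G$ remain non-decreasing, and since Theorem \ref{thm:inhomoliggett} concerns plain association (no supermodularity, so no lattice structure is needed) that proof goes through verbatim over $\preceq$. I expect the main obstacle to be precisely the bookkeeping check that this partial-order version of Lemma \ref{extliggett} is legitimate --- that nothing in its proof secretly relies on the coordinatewise order --- together with verifying that the remaining hypotheses of Lemma \ref{extliggett} (rich domain, bounded symbol, continuity of $\tilde x\mapsto\tilde p(\tilde x,0)$) transfer to $\tilde X$, which they do via the B\"ottcher formulas and \eqref{eq:bddsymb}; everything else is the routine translation recorded above.
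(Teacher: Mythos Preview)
Your proposal is correct and is essentially the paper's approach, reframed. Your partial order $\preceq$ (equality in the time coordinate, coordinatewise in space) is exactly what the paper implements concretely: in the $(\Leftarrow)$ direction the paper fixes a slice $\{s\}\times\R^d$, notes that $\tilde T_t h|_{\{s\}\times\R^d}$ is non-decreasing there (your $\preceq$-stochastic monotonicity of $\tilde X$), extends to a function $v$ constant in the time coordinate (the canonical $\preceq$-non-decreasing lift), and then runs the Cauchy-problem argument via Lemma~\ref{extcauchy} by hand rather than quoting a $\preceq$-version of Lemma~\ref{extliggett}. So where you say ``the proof of Lemma~\ref{extliggett} goes through verbatim over $\preceq$'', the paper simply carries out that proof explicitly on each slice; the $F,G$ in the paper's Cauchy argument are the same as yours, and the key inequality \eqref{proofsuffliggett1} is exactly your generator inequality for $\preceq$-non-decreasing functions evaluated at $\tilde T_t h$, $\tilde T_t k$. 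For the $(\Rightarrow)$ direction the paper instead passes through functions non-decreasing in \emph{all} $d{+}1$ coordinates and invokes Lemma~\ref{extliggett} as stated (the semigroup-to-generator implication not actually needing monotonicity), while you stay with $\preceq$ throughout; both routes land on the same computation with $h,k$ constant in time. Your abstraction is cleaner conceptually, but the ``bookkeeping check'' you flag as the main obstacle is precisely what the paper writes out in detail, so there is no substantive difference.
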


\begin{proof}
Let $\tilde{X} = (\tilde{X}_t)_{t\geq0}$ on $\R_+\times \R^d$ be transformation of $X$, given by Prescription \ref{pre1}, which has Feller semigroup $(\tilde{T}_t)_{t\geq0}$, generator $(\mcl{\tilde{A}},\mcl{D}(\mcl{\tilde{A}}))$ with rich domain, bounded symbol $\tilde{p}(\tilde{x},\tilde{\xi})$, characteristics $(\tilde{b}(\tilde{x}), \tilde{\Sigma}(\tilde{x}), \tilde{\nu}(\tilde{x},d\tilde{y}))$ and extended generator $I(\tilde{p})$ on $C_b^2(\R_+\times\R^d)$, as given to us by equations \eqref{eq:semigroup} to \eqref{eq:bddsymb}.

\mypar

\noindent ($\Rightarrow$). Assume $T_{s,s+t} fg \geq T_{s,s+t} f T_{s,s+t} g$ for all $s,t\geq0$ and all $f,g\in C_b(\R^d)\cap\mcl{F}_i$. Choose $h,k\in C_b(\R_+\times \R^d)\cap \mcl{F}_i$, $\tilde{x} = (s,x)$. Then $h_{s+t},k_{s+t}\in C_b(\R^d)\cap\mcl{F}_i$, and 
\begin{align*}
\tilde{T}_t hk(\tilde{x}) & = T_{s,s+t} h_{s+t} k_{s+t} (x)
 \geq T_{s,s+t} h_{s+t}(x) \cdot T_{s,s+t} k_{s+t} (x)
 = \tilde{T}_t h(\tilde{x}) \cdot \tilde{T}_t k(\tilde{x}). 
\end{align*}
Observe that the bounded symbol $\tilde{p}(\tilde{x},\tilde{\xi})$  also satisfies $\tilde{x}\mapsto \tilde{p}(\tilde{x}, 0)$ is continuous, since 
\begin{align*}
\tilde{p}(\tilde{x},0) = i\cdot(0) + p_s(x,0) = 0.
\end{align*}
So by Lemma \ref{extliggett}, we have that the extended generator $I(\tilde{p})$ satisfies 
\begin{equation}
\label{blehbleh}
I(\tilde{p}) hk \geq h I(\tilde{p}) k + k I(\tilde{p}) h, \hspace{.5cm} h,k\in C_b^2(\R_+\times\R^d)\cap \mcl{F}_i.
\end{equation}
Choose $f,g\in C_b(\R^d)\cap\mcl{F}_i$. Then there exists $h,k\in C_b(\R_+\times \R^d)\cap \mcl{F}_i$, where $h,k$ are constant with respect to the first argument, and $f(x) = h(\tilde{x})$ and $g(x) = k(\tilde{x})$. Choose $\tilde{x} = (s,x)$. Then
\begin{align*}
I(\tilde{p}) hk(\tilde{x}) & = \frac{\partial}{\partial s} h(s,x)k(s,x) + I(p_s) h_s k_s(x)
 = 0 + I(p_s) fg(x)
 =I(p_s) fg(x)
\end{align*}
and 
\begin{align*}
& h(\tilde{x}) I(\tilde{p}) k(\tilde{x}) + k(\tilde{x}) I(\tilde{p}) h(\tilde{x})
\\  & = h(s,x) \left( \frac{\partial}{\partial s} k(s,x) + I(p_s) k_s(x) \right) + k(s,x)  \left(\frac{\partial}{\partial s} h(s,x) + I(p_s) h_s(x)\right)
\\ & = h(\tilde{x}) I(p_s) k_s(x) + k(\tilde{x}) I(p_s) h_s(x)
\\ & = f(x) I(p_s) g(x) + g(x) I(p_s) f(x).
\end{align*}
Thus, by \eqref{blehbleh}, we have $I(p_s) fg \geq f I(p_s) g + g I(p_s) f.$

\mypar

\noindent ($\Leftarrow$). Assume, for all $s\geq0$, $I(p_s) fg \geq f I(p_s) g + g I(p_s) f$, $\forall f,g \in C_b^2(\R^d)\cap\mcl{F}_i$. Choose $f,g\in C_b^2(\R_+\times \R^d)\cap\mcl{F}_i$, $\tilde{x}=(s,x)$, then
\begin{equation}
\label{proofsuffliggett}
\begin{split}
I(\tilde{p}) fg(\tilde{x}) & = \frac{\partial}{\partial s} f(s,x)g(s,x) + I(p_s) f_s g_s(x)
\\ & = f(s,x) \frac{\partial}{\partial s}g(s,x) + g(s,x)  \frac{\partial}{\partial s} f(s,x) + I(p_s) f_s g_s(x)
\\ & \geq  f(s,x) \frac{\partial}{\partial s}g(s,x) + g(s,x)  \frac{\partial}{\partial s} f(s,x) +   f_s(x) I(p_s) g_s(x) + g_s(x) I(p_s) f_s(x)
\\ & = f(s,x)\left( \frac{\partial}{\partial s}g(s,x)  + I(p_s) g_s(x) \right) + g(s,x) \left(\frac{\partial}{\partial s}f(s,x)  + I(p_s) f_s(x)\right)
\\ & = f(\tilde{x}) I(\tilde{p}) g(\tilde{x}) + g(\tilde{x}) I(\tilde{p}) f(\tilde{x}).
\end{split}
\end{equation}
Note that we assumed $(T_{s,t})_{s\leq t}$ is stochastically monotone. However, this does not imply that $(\tilde{T}_t)_{t\geq0}$ is stochastcally monotone. To see this, choose $\tilde{x}=(s,x)$ and $\tilde{y}= (r,y)$, where $\tilde{x}\leq \tilde{y}$ with $s<r$. Then let $f\in\mcl{F}_i\cap C_b(\R_+\times \R^d)$. Observe that 
\begin{align*}
\tilde{T}_t f(\tilde{x}) & = \tilde{\E}^{\tilde{x}} f(\tilde{X}_t)  
 = \E (f_{s+t} (X_{s+t})|X_s=x)
 \not\leq  \E (f_{r+t} (X_{r+t})|X_r=y)
 = \tilde{T}_t f(\tilde{y})
\end{align*}
since the sample paths of $X$ may not be monotone non-decreasing. But we can still get our desired result from the stochastic monotonicity of $(T_{s,t})_{s\leq t}$. Fix $s\geq0$. Choose $h,k\in C_b^2(\R_+\times\R^d)\cap \mcl{F}_i$. Then $\tilde{T}_t h|_{\{s\}\times\R^d}$, $\tilde{T}_t k|_{\{s\}\times\R^d}$, $\tilde{T}_t hk|_{\{s\}\times\R^d} \in C_b^2(\{s\}\times\R^d)\cap \mcl{F}_i$ for a fixed $s\geq0$. It is easy to see that these functions will be in $C_b^2(\{s\}\times\R^d)$. To see that they are non-decreasing on $\{s\}\times\R^d$, choose $\tilde{x}: =(s,x)\leq (s,y) =:\tilde{y}$. Then
\begin{align*}
\tilde{T}_t h|_{\{s\}\times\R^d} (\tilde{x})  =\tilde{T}_t h|_{\{s\}\times\R^d} (s,x) & = T_{s,s+t} h_{s+t} (x)
 \leq T_{s,s+t} h_{s+t}(y)
 = \tilde{T}_t h|_{\{s\}\times\R^d} (\tilde{y})
\end{align*}
by stochastic monotonicity of $(T_{r,t})_{r\leq t}$. Observe that there exists $v \in C_b^2(\R_+\times\R^d)\cap\mcl{F}_i$ such that $v$ is constant with respect to the first argument in $\R_+$ and $v(s,x) = \tilde{T}_t h|_{\{s\}\times\R^d} (s,x)$. Similarly, there is $w\in  C_b^2(\R_+\times\R^d)\cap\mcl{F}_i$ such that $w$ is constant with respect to first argument, and $w(s,x) = \tilde{T}_t k|_{\{s\}\times\R^d} (s,x)$. By inequality \eqref{proofsuffliggett}, we have $$I(\tilde{p}) vw \geq v I(\tilde{p}) w + w I(\tilde{p})v,$$
which implies for any $x\in\R^d$, with $\tilde{x} = (s,x)$, 
\begin{equation}
\label{proofsuffliggett1}
\begin{split}
I(\tilde{p}) \left(\tilde{T}_t h|_{\{s\}\times\R^d} \tilde{T}_t k|_{\{s\}\times\R^d}\right) (\tilde{x}) & \geq \tilde{T}_t h|_{\{s\}\times\R^d} (\tilde{x}) \cdot  I(\tilde{p}) \tilde{T}_t k|_{\{s\}\times\R^d} (\tilde{x})  
\\ & \hspace{.3cm} + \tilde{T}_t k|_{\{s\}\times\R^d} (\tilde{x}) \cdot I(\tilde{p}) \tilde{T}_t h|_{\{s\}\times\R^d} (\tilde{x})
\end{split}
\end{equation}

\noindent Now define $F, G:[0,\infty)\rightarrow C_b(\R_+\times\R^d)$, by 
\begin{equation*}
F(t): = \tilde{T}_t hk - \tilde{T}_t h \cdot \tilde{T}_t k \hspace{.5cm} \text{ and } \hspace{.5cm} G(t):= F'(t) - I(\tilde{p})F(t).
\end{equation*}
It is not hard to verify that $F,G$ are continuous on $[0,\infty)$ with respect to local uniform convergence. By Lemma \ref{extcauchy}, we have the solution
\begin{equation*}
F(t) = \tilde{T}_t F(0) + \int_0^t \tilde{T}_{t-r} G(r) dr = \int_0^t \tilde{T}_{t-r} G(r) dr.
\end{equation*} 
Now, choose $\tilde{x} = (s,x)$. Then by \eqref{proofsuffliggett1}
\begin{align*}
F'(t)(\tilde{x}) & = I(\tilde{p}) \tilde{T}_t hk (\tilde{x}) - (\tilde{T}_t h(\tilde{x}) \cdot I(\tilde{p}) \tilde{T}_t k(\tilde{x}) + \tilde{T}_t k(\tilde{x}) \cdot I(\tilde{p}) \tilde{T}_t h(\tilde{x}))
\\ & = I(\tilde{p}) \tilde{T}_t hk|_{\{s\}\times\R^d} (\tilde{x})  - \left(\tilde{T}_t h|_{\{s\}\times\R^d} (\tilde{x}) \cdot  I(\tilde{p}) \tilde{T}_t k|_{\{s\}\times\R^d} (\tilde{x})  \right.
\\ & \left.\hspace{3.85cm} + \tilde{T}_t k|_{\{s\}\times\R^d} (\tilde{x}) \cdot I(\tilde{p}) \tilde{T}_t h|_{\{s\}\times\R^d}(\tilde{x}) \right)
\\ & \geq  I(\tilde{p}) \tilde{T}_t hk|_{\{s\}\times\R^d} (\tilde{x}) -  I(\tilde{p}) \left(\tilde{T}_t h|_{\{s\}\times\R^d} \tilde{T}_t k|_{\{s\}\times\R^d}\right) (\tilde{x})
\\ & = I(\tilde{p}) F(t)|_{\{s\}\times\R^d} (\tilde{x})
\\ & =  I(\tilde{p}) F(t) (\tilde{x}).
\end{align*}

\noindent Thus, $G(t)(\tilde{x}) = F'(t)(\tilde{x})  -  I(\tilde{p}) F(t) (\tilde{x}) \geq0$. In other words, $G(t)|_{\{s\}\times\R^d} \geq0$. Hence, 
\begin{equation*}
F(t)|_{\{s\}\times\R^d} = \int_0^t \tilde{T}_{t-r} G(r)|_{\{s\}\times\R^d} dr \geq0.
\end{equation*}
This finally yields $\tilde{T}_t hk(\tilde{x}) \geq \tilde{T}_t h(\tilde{x}) \cdot \tilde{T}_t k(\tilde{x})$, for all $\tilde{x}\in \{s\}\times\R^d$, which then yields 
\begin{equation}
\label{proofsuffassoc}
T_{s,s+t} h_{s+t} k_{s+t} (x) \geq T_{s,s+t} h_{s+t} (x)\cdot T_{s,s+t} k_{s+t} (x)
\end{equation}
for all $x\in\R^d$. Now let $f,g\in C_b^2(\R^d)\cap \mcl{F}_i$. Then there are functions $h,k\in C_b^2(\R_+\times\R^d)\cap\mcl{F}_i$ that are constant with respect to the first argument, such that $f(x) = h(\tilde{x})$ and $g(x) = k(\tilde{x})$. Then by \eqref{proofsuffassoc}, we have 
\begin{equation}
\label{proofsuffassoc1}
T_{s,s+t} f g (x) \geq T_{s,s+t} f (x)\cdot T_{s,s+t} g(x).
\end{equation}
Note that we chose a fixed arbitrary $s\geq0$. We could follow the above procedure using any $s\geq0$, and thus we would obtain \eqref{proofsuffassoc1} for all $s,t\geq0$, giving us our desired result.
\end{proof}

We can now apply this to characterize association for jump-FEPs based on the time-dependent \Levy measures. 

\begin{theorem}
\label{thm:inhomoassocjump}
Let $X=(X_t)_{t\geq0}$ be a FEP with  FES $(T_{s,t})_{s\leq t}$, generators $(\mcl{A}_s)_{s\geq0}$ with rich domains, and that $C_c^\infty(\R^d)$ is the core for $\mcl{A}_s$, for all $s\geq0$. Let the corresponding symbols $p_s(x,\xi)$ be $s$-continuous and bounded 
with characteristic triplet $(b_s(x), 0, \nu_s(x,dy))$. If $X$ is stochastically monotone, then $X$ is spatially associated if and only if 
\begin{equation}
\label{resnicktimespace}
\nu_s(x, (\R_+^d \cup\R_-^d)^c) = 0, \hspace{.5cm} \forall s\geq0, \hspace{.1cm} x\in\R^d.
\end{equation}
\end{theorem}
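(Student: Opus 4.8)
The plan is to reduce everything to the generator inequality supplied by Theorem~\ref{thm:inhomoliggett}, which already characterizes spatial association of $X$ as
\[
I(p_s)fg \geq f\,I(p_s)g + g\,I(p_s)f, \qquad \forall f,g\in C_b^2(\R^d)\cap\mcl{F}_i,\ s\geq0.
\]
So it suffices to show that, for a jump triplet $(b_s(x),0,\nu_s(x,dy))$, this family of inequalities is equivalent to \eqref{resnicktimespace}. The first step is to compute the ``carré-du-champ'' quantity $\Gamma_s(f,g)(x) := I(p_s)fg(x) - f(x)I(p_s)g(x) - g(x)I(p_s)f(x)$ directly from the representation \eqref{inhomo:ido}. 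Since there is no diffusion term, the drift part $b_s(x)\cdot\nabla(fg) - f\,b_s\cdot\nabla g - g\,b_s\cdot\nabla f$ vanishes identically, and in the jump part the compensator contributions $\nabla(\cdot)\cdot y\chi(y)$ cancel against $f\nabla g + g\nabla f$, leaving
\[
\Gamma_s(f,g)(x) = \int_{y\neq0}\bigl(f(x+y)-f(x)\bigr)\bigl(g(x+y)-g(x)\bigr)\,\nu_s(x,dy),
\]
the integral converging absolutely since each difference is $O(|y|)$ near the origin. Thus the inequality from Theorem~\ref{thm:inhomoliggett} is exactly the assertion $\Gamma_s(f,g)\geq0$ for all non-decreasing $f,g\in C_b^2(\R^d)$.

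For sufficiency ($\Leftarrow$), assume \eqref{resnicktimespace}. If $y\in\R_+^d$ then both $f(x+y)-f(x)$ and $g(x+y)-g(x)$ are $\geq0$ because $f,g\in\mcl{F}_i$; if $y\in\R_-^d$ both are $\leq0$; in either case the integrand of $\Gamma_s(f,g)(x)$ is nonnegative, and since $\nu_s(x,\cdot)$ is carried by $\R_+^d\cup\R_-^d$ we get $\Gamma_s(f,g)\geq0$. Theorem~\ref{thm:inhomoliggett} then yields spatial association of $X$. (Note this direction genuinely needs Theorem~\ref{thm:inhomoliggett}, since the jump-Feller association theorem of \cite{Tu2018a} cannot be applied to the transformed process directly: that process is not stochastically monotone.)

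For necessity ($\Rightarrow$), I would route through B\"{o}ttcher's transformation. Let $\tilde X$ be the transformed time-homogeneous Feller process of Prescription~\ref{pre1}; by \eqref{eq:b}--\eqref{eq:bddsymb} it has jump triplet $(\tilde b(\tilde x),0,\tilde\nu(\tilde x,d\tilde y))$ with $\tilde\nu(\tilde x,d\tilde y)=\nu_s(x,dy)\,\delta_0(dr)$ and bounded symbol satisfying $\tilde p(\tilde x,0)=0$. Exactly as in the ($\Rightarrow$) part of the proof of Theorem~\ref{thm:inhomoliggett}, spatial association of $X$ gives $\tilde T_t hk\geq \tilde T_t h\cdot\tilde T_t k$ for all $h,k\in C_b(\R_+\times\R^d)\cap\mcl{F}_i$, i.e.\ $\tilde X$ is spatially associated; hence, conditionally on $\tilde X_0=\tilde x$, each $\tilde X_t$ is an associated random vector and therefore PUOD by Proposition~\ref{propdepmap}. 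Now apply Lemma~\ref{PODnec} to the rich jump-Feller process $\tilde X$ on $\R^{d+1}$ --- this lemma does \emph{not} assume stochastic monotonicity, which is the key point since $\tilde X$ fails to have it --- to obtain $\tilde\nu(\tilde x,(\R_+^{d+1}\cup\R_-^{d+1})^c)=0$ for every $\tilde x$. Finally, because $\tilde\nu$ lives on $\{0\}\times\R^d$ and $(0,y)\in\R_+^{d+1}$ (resp.\ $\R_-^{d+1}$) if and only if $y\in\R_+^d$ (resp.\ $\R_-^d$), one has $\{y:(0,y)\notin\R_+^{d+1}\cup\R_-^{d+1}\}=(\R_+^d\cup\R_-^d)^c$, whence $\nu_s(x,(\R_+^d\cup\R_-^d)^c)=\tilde\nu(\tilde x,(\R_+^{d+1}\cup\R_-^{d+1})^c)=0$, which is \eqref{resnicktimespace}.

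The main obstacle is this necessity direction: one must verify that the transformed process, though no longer stochastically monotone, still meets the hypotheses of Lemma~\ref{PODnec}, and that the orthant condition transfers correctly through the degenerate $\delta_0$-factor in the time coordinate --- the crucial point being that the zero jump in the new coordinate sits on the boundary of both $\R_+^{d+1}$ and $\R_-^{d+1}$, not outside them. An alternative to the B\"{o}ttcher route for necessity would be a direct localization argument on $\Gamma_s$: if $\nu_s(x,\cdot)$ charged a compact set $K\subseteq(\R_+^d\cup\R_-^d)^c$, one partitions $K$ by a pair of indices $(i,j)$ with $y_i>0>y_j$ on each piece and builds non-decreasing $f,g\in C_b^2(\R^d)\cap\mcl{F}_i$ depending increasingly on coordinate $i$ resp.\ $j$, localized so the negative contribution of $K$ outweighs the remainder, contradicting $\Gamma_s(f,g)\geq0$; this mirrors the homogeneous argument behind Lemma~\ref{PODnec}.
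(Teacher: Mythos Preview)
Your sufficiency argument ($\Leftarrow$) is identical to the paper's: compute the carr\'e-du-champ, split over $\R_+^d$ and $\R_-^d$, invoke Theorem~\ref{thm:inhomoliggett}.

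For necessity ($\Rightarrow$) you take a genuinely different route. The paper stays entirely on the $\R^d$ side: from spatial association it invokes Theorem~\ref{thm:inhomoliggett} to get $\Gamma_s(f,g)\geq0$, then argues by contradiction exactly along the lines of your ``alternative'' paragraph --- assuming $\nu_{t_0}(x,(0,\infty)\times(-\infty,0))>0$, it finds $a>0$ with $\nu_{t_0}(x,(a,\infty)\times(-\infty,-a))>0$ and builds explicit one-coordinate bump functions $f,g\in C_b^\infty(\R^2)\cap\mcl{F}_i$ with $f(x)=g(x)=0$ to force $\Gamma_{t_0}(f,g)(x)<0$. So what you list as the alternative is in fact the paper's proof. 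Your primary route --- pass to $\tilde X$, note it is associated hence PUOD, apply Lemma~\ref{PODnec}, and read the orthant condition back through $\tilde\nu(\tilde x,d\tilde y)=\nu_s(x,dy)\delta_0(dr)$ --- is correct and is precisely how the paper proves the \emph{later} Theorem~\ref{thm:inhomoPUODnec}. Your observation that the time-zero jump lies on the boundary of both $\R_+^{d+1}$ and $\R_-^{d+1}$ is exactly the point needed for the transfer. The trade-off: the paper's direct argument is more self-contained and shows concretely how a single bad orthant produces a violating pair $(f,g)$; your transformation argument is cleaner bookkeeping and makes explicit that association $\Rightarrow$ PUOD $\Rightarrow$ \eqref{resnicktimespace} already closes the loop, foreshadowing Corollary~\ref{CorEquiv}.
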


\begin{proof}
($\Leftarrow$)  Assume \eqref{resnicktimespace}. Let $I(p_s)$ be the extended generator onto $C_b^2(\R^d)$, which is an integro-differential operator. Choose $s\geq0$, $f,g\in C_b^2(\R^d)\cap \mcl{F}_i$. Then
\begin{align*}
&I(p_s) fg(x) - f(x) I(p_s)g(x) - g(x) I(p_s) f(x) 
\\ & = \int_{\R^d\setminus\{0\}} (f(x+y) - f(x)) (g(x+y) - g(x)) \nu_s(x,dy)
\\ & =  \int_{\R_+^d\setminus\{0\}} (f(x+y) - f(x)) (g(x+y) - g(x)) \nu_s(x,dy)
\\ & \hspace{.3cm} +  \int_{\R_-^d\setminus\{0\}} (f(x+y) - f(x)) (g(x+y) - g(x)) \nu_s(x,dy)
\\ & \geq0.
\end{align*}
Then by Theorem \ref{thm:inhomoliggett}, $X$ is spatially associated.

\mypar

\noindent ($\Rightarrow$) We just show the proof for dimension $d=2$. Let $X$ be spatially associated. Then by Theorem \ref{thm:inhomoliggett}, $I(p_s) fg \geq f I(p_s)g + g I(p_s) f$ for all $s\geq0$, $f,g\in C_b^2(\R^2) \cap \mcl{F}_i$. This implies 
$$\int_{\R^2\setminus\{0\}} (f(x+y) - f(x)) (g(x+y) - g(x)) \nu_s(x,dy)\geq0, \hspace{.3cm} \forall s\geq0.$$

\noindent Assume for contradiction that there exists $t_0\geq0$ and $x=(x_1,x_2)\in\R^2$ such that $\nu_{t_0}(x,(\R_+^2\cup \R_-^2)^c)>0$. WLOG, say $\nu_{t_0}(x,(0,\infty) \times (-\infty,0))>0$. Then by continuity of measure, there exists $a>0$ such that  $\nu_{t_0}(x,(a,\infty) \times (-\infty,-a))>0$.  Fix $\epsilon>0$ and choose $f,g\in C_b^\infty(\R^2) \cap \mcl{F}_i$ such that 
\begin{equation*}
f(y_1,y_2)= \begin{cases} 
      0 & \textrm{ if \hspace{.1cm} $y_1\leq x_1+\epsilon a$} \\
      1 & \textrm{ if \hspace{.1cm} $y_1\geq x_1+ a$,} \\
   \end{cases} \quad \quad \quad
g(y_1,y_2)= \begin{cases} 
      0 & \textrm{ if \hspace{.1cm} $y_2\geq x_2-\epsilon a$} \\
      -1 & \textrm{ if \hspace{.1cm} $y_2\leq x_2-a$.} \\
   \end{cases} 
\end{equation*}

\noindent This implies $f(x)=g(x)=0$. Hence,
\begin{align*}
0 & \leq \int_{y\neq0} (f(x+y)- f(x))(g(x+y)-g(x)) \nu_{t_0}(x,dy)
\\ & = \int_{y\neq0} f(x+y)g(x+y) \nu_{t_0}(x,dy)
\\& = \int_{(a,\infty)\times (-\infty,-a)} f(x+y)g(x+y) \nu_{t_0}(x,dy)  + \int_{ (a,\infty)\times [-a,-\epsilon a]} f(x+y)g(x+y) \nu_{t_0}(x,dy)
\\ & \hspace{.2cm} + \int_{ [\epsilon a,a] \times (-\infty,-a)} f(x+y)g(x+y) \nu_{t_0}(x,dy) + \int_{ [\epsilon a,a] \times [-a,-\epsilon a]} f(x+y)g(x+y) \nu_{t_0}(x,dy)
\\ & = -\nu_{t_0}(x, (a,\infty)\times (-\infty,-a)) - \int_{(a,\infty)\times [-a,-\epsilon a]} g(x+y) \nu_{t_0}(x,dy)
\\ & \hspace{.2cm} +\int_{ [\epsilon a,a] \times (-\infty,-a)} f(x+y)\nu_{t_0}(x,dy)  + \int_{[\epsilon a,a] \times [-a,-\epsilon a]} f(x+y)g(x+y) \nu_{t_0}(x,dy)
\\ &\leq -\nu_{t_0}(x, (a,\infty)\times (-\infty,-a)),
\end{align*}   
which implies $\nu_{t_0}(x,(a,\infty)\times (-\infty,-a)) =0$, a contradiction. 
\end{proof}

\subsection{Other forms of dependence in FEPs}

In \cite{Tu2018a}, we showed that the \Levy measure condition  \eqref{resnickx}  was not only equivalent to spatial association for stochastically monotone jump-Feller processes, but also to spatial PUOD, PLOD, POD, PSD, PSA, and WA. These other forms of dependence can analogously be characterized in the time-inhomogeneous setting for the jump processes considered in Theorem \ref{thm:inhomoassocjump}, as was mentioned in Remark \ref{rem:spatassoc}(ii). To do this, we show that \eqref{resnicktimespace} is a necessary condition for spatial PUOD. Firstly, 

\begin{definition}
{\rm
Let $X=(X_t)_{t\geq0}$ be a time-inhomogeneous Markov process on $\R^d$. We say $X$ is \textbf{spatially  PUOD} if for every $s\leq t$, $x\in\R^d$, 
\begin{center}
$\E \left( \prod_{i=1}^d f_i (X_t^{(i)}) \left|X_s=x \right.\right) \geq \prod_{i=1}^d \E (f_i(X_t^{(i)})|X_s=x)$,
\end{center}
where $f_i:\R\rightarrow\R_+$ are non-decreasing. }
\end{definition}


\begin{theorem}
\label{thm:inhomoPUODnec}
Let $X=(X_t)_{t\geq0}$ be a FEP with FES $(T_{s,t})_{s\leq t}$, generators $(\mcl{A}_s)_{s\geq0}$ with rich domains, and that $C_c^\infty(\R^d)$ is the core for $\mcl{A}_s$, for all $s\geq0$. Let the corresponding symbols $p_s(x,\xi)$ be $s$-continuous and bounded 
with characteristic triplet $(b_s(x), 0, \nu_s(x,dy))$. If $X$ is spatially PUOD, then $\nu_s(x, (\R_+^d \cup\R_-^d)^c) = 0$,  $\forall s\geq0$, $x\in\R^d$. 
\end{theorem}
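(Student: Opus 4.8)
The plan is to reduce the time-inhomogeneous statement to the time-homogeneous result already available in Lemma \ref{PODnec}, using B\"{o}ttcher's transformation exactly as in the proof of Theorem \ref{thm:inhomoliggett}. First I would introduce the transformed process $\tilde{X} = (\tilde{X}_t)_{t\geq0}$ on $\R_+\times\R^d$ given by Prescription \ref{pre1}, which (since the symbols $p_s(x,\xi)$ are $s$-continuous and bounded and $C_c^\infty(\R^d)$ is a core) is a rich Feller process with Feller semigroup $(\tilde{T}_t)_{t\geq0}$, symbol $\tilde{p}(\tilde{x},\tilde{\xi}) = ir + p_s(x,\xi)$ and characteristic triplet $(\tilde{b}(\tilde{x}), 0, \tilde{\nu}(\tilde{x},d\tilde{y}))$ with $\tilde{\nu}(\tilde{x},d\tilde{y}) = \nu_s(x,dy)\delta_0(dr)$ for $\tilde{x} = (s,x)$, $\tilde{y} = (r,y)$; note the diffusion part is absent, so $\tilde{X}$ is a jump-Feller process on $\R^{d+1}$ and Lemma \ref{PODnec} applies to it.

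Next I would show that spatial PUOD of $X$ forces $\tilde{X}_t$ to be PUOD (as a random vector in $\R^{d+1}$) for each $t\geq0$. The extra (zeroth) coordinate of $\tilde{X}_t$ started from $\tilde{x} = (s,x)$ is the deterministic value $s+t$, which is independent of everything; a degenerate (constant) coordinate is harmless for the orthant-dependence inequality, so PUOD of $(\tilde{X}_t^{(0)},\dots,\tilde{X}_t^{(d)})$ is equivalent to PUOD of $(\tilde{X}_t^{(1)},\dots,\tilde{X}_t^{(d)}) = (X_{s+t}^{(1)},\dots,X_{s+t}^{(d)})$ conditioned on $X_s = x$. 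The latter is exactly the spatial PUOD hypothesis on $X$ (take non-decreasing $f_i:\R\to\R_+$, and for the zeroth coordinate take $f_0$ constant). Hence $\tilde{X}_t$ is PUOD for every $t\geq0$.

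Then Lemma \ref{PODnec}, applied to the rich jump-Feller process $\tilde{X}$ with triplet $(\tilde{b}(\tilde{x}),0,\tilde{\nu}(\tilde{x},d\tilde{y}))$, yields $\tilde{\nu}(\tilde{x}, (\R_+^{d+1}\cup\R_-^{d+1})^c) = 0$ for all $\tilde{x}\in\R_+\times\R^d$. It remains to translate this back into a condition on $\nu_s$. Since $\tilde{\nu}(\tilde{x},d\tilde{y})$ is supported on the hyperplane $\{r = 0\}$ (because of the factor $\delta_0(dr)$), a point $\tilde{y} = (0,y)$ lies in $\R_+^{d+1}$ iff $y\in\R_+^d$ and in $\R_-^{d+1}$ iff $y\in\R_-^d$; consequently $\tilde{\nu}(\tilde{x}, (\R_+^{d+1}\cup\R_-^{d+1})^c) = \nu_s(x, (\R_+^d\cup\R_-^d)^c)$. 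Therefore $\nu_s(x,(\R_+^d\cup\R_-^d)^c) = 0$ for all $s\geq0$ and $x\in\R^d$, which is the claim.

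The step I expect to require the most care is the reduction of ``spatial PUOD of $X$'' to ``$\tilde{X}_t$ PUOD for each $t$'': one must make precise that a degenerate deterministic coordinate does not interfere with the orthant inequality, and verify the matching of the conditioning $\{X_s=x\}$ with the Feller (time-homogeneous) starting point $\tilde{X}_0 = (s,x)$ via \eqref{transformedkernel} and \eqref{eq:semigroup}. Everything else is a direct invocation of B\"{o}ttcher's transformation formulas together with Lemma \ref{PODnec}, plus the elementary support computation for $\tilde{\nu}$.
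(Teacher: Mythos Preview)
Your proposal is correct and follows essentially the same approach as the paper: transform $X$ into the time-homogeneous Feller process $\tilde{X}$ via Prescription \ref{pre1}, use the determinism of the zeroth coordinate to pass spatial PUOD of $X$ to PUOD of $\tilde{X}_t$, apply Lemma \ref{PODnec}, and then read off the condition on $\nu_s$ from the support structure $\tilde{\nu}(\tilde{x},d\tilde{y}) = \nu_s(x,dy)\delta_0(dr)$. The paper carries out the PUOD step by an explicit product-function computation and the translation step by the set inclusion $\{0\}\times(\R_+^d\cup\R_-^d)^c \subseteq (\R_+^{d+1}\cup\R_-^{d+1})^c$, but these are exactly the arguments you sketch.
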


\begin{proof}
Let $\tilde{X} = (\tilde{X}_t)_{t\geq0}$ on $\R_+\times \R^d$ be the transformation of $X$, given by Prescription \ref{pre1}, which has Feller semigroup $(\tilde{T}_t)_{t\geq0}$, generator $(\mcl{\tilde{A}},\mcl{D}(\mcl{\tilde{A}}))$ with rich domain, bounded symbol $\tilde{p}(\tilde{x},\tilde{\xi})$, characteristics $(\tilde{b}(\tilde{x}),0 , \tilde{\nu}(\tilde{x},d\tilde{y}))$ and extended generator $I(\tilde{p})$ on $C_b^2(\R_+\times\R^d)$, as given to us by equations \eqref{eq:semigroup} to \eqref{eq:bddsymb}.

\par
Choose $\tilde{x}=(s,x)$. Let $f:\R_+\times \R^d\rightarrow\R_+$ defined by $f(x_0,...,x_d) = \prod_{i=0}^d f_i(x_i)$, where $f_i:\R\rightarrow\R_+$ are non-decreasing, for all $i$. Then 
\begin{align*}
\tilde{\E}^{\tilde{x}} f(\tilde{X}_t^{(0)},...,\tilde{X}_t^{(d)})  = \tilde{\E}^{\tilde{x}} f(\tilde{X}_t)
& = \E (f_{s+t}(X_{s+t})|X_s=x)
\\ & =  \E \left(f(s+t,X_{s+t}^{(1)},...,X_{s+t}^{(d)}) |X_s=x\right)
\\ & =  \E \left(f_0(s+t) f_1(X_{s+t}^{(1)})... f_d(X_{s+t}^{(d)}) |X_s=x\right)
\\ & \geq f_0(s+t) \cdot \prod_{i=1}^d \E ( f_i(X_{s+t}^{(i)}) | X_s=x)
\\ & = \prod_{i=0}^d \tilde{\E}^{\tilde{x}} f_i (\tilde{X}_t^{(i)})
\end{align*}
where we obtain the inequality by spatial PUOD of process $X$. Thus, the above calculation shows $\tilde{X}_t$ is PUOD for all $t\geq0$ in $\R_+\times\R^d$ with respect $\tilde{\dP}^{\tilde{x}}$, for all $\tilde{x}$. By Lemma \ref{PODnec}, we have that $\tilde{\nu}(\tilde{x}, (\R_+^{d+1} \cup \R_-^{d+1})^c) = 0$ for all $\tilde{x}\in\R_+\times\R^d$. Observe that the set $$\{0\}\times (\R_+^d \cup \R_-^d)^c \subseteq (\R_+^{d+1} \cup \R_-^{d+1})^c.$$
Hence, if $\tilde{x}=(s,x)$,
\begin{align*}
0  = \tilde{\nu}(\tilde{x}, (\R_+^{d+1} \cup \R_-^{d+1})^c)
 \geq \tilde{\nu}(\tilde{x}, \{0\}\times (\R_+^d \cup \R_-^d)^c )
& = \nu_s(x, (\R_+^d \cup \R_-^d)^c)\cdot \delta_0(\{0\})
\\& =  \nu_s(x, (\R_+^d \cup \R_-^d)^c)
\end{align*}
which implies $ \nu_s(x, (\R_+^d \cup \R_-^d)^c)=0$, completing our result.
\end{proof}

\begin{remark}
{\rm 
Theorem \ref{thm:inhomoPUODnec} also holds true if we replace ``PUOD" by ``PLOD". This can be easily verified by choosing $f_i:\R\rightarrow\R_+$ in the proof of Theorem \ref{thm:inhomoPUODnec} to be  non-increasing.}
\end{remark}

\begin{corollary}
\label{CorEquiv}
Let $X=(X_t)_{t\geq0}$ be a stochastically monotone FEP with the same assumptions as Theorem \ref{thm:inhomoPUODnec}. Then condition \eqref{resnicktimespace} is equivalent to $X$ being spatially associated, WA, PSA, PSD, POD, PUOD, PLOD. 
\end{corollary}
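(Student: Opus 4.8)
The plan is to assemble Corollary \ref{CorEquiv} from the pieces already proved, using the implication map of Proposition \ref{propdepmap} as the logical backbone. First I would fix the setup: $X$ is a stochastically monotone FEP satisfying the hypotheses of Theorem \ref{thm:inhomoPUODnec} with characteristic triplet $(b_s(x),0,\nu_s(x,dy))$, so that Theorems \ref{thm:inhomoassocjump}, \ref{thm:inhomoPUODnec} and the Remark following the latter all apply. I would then set up a cycle of implications among the eight properties (condition \eqref{resnicktimespace}, spatial association, WA, PSA, PSD, POD, PUOD, PLOD) and show each implication in the cycle holds, which forces all of them to be equivalent.

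The concrete steps, in order. Step 1: condition \eqref{resnicktimespace} $\Rightarrow$ spatial association. This is exactly the ($\Leftarrow$) direction of Theorem \ref{thm:inhomoassocjump}. Step 2: spatial association $\Rightarrow$ WA, PSA, PSD, POD, PUOD, PLOD. Here I would invoke the interpretation in Remark \ref{rem:spatassoc}(i)--(ii): for each $s\leq t$ and $x$, spatial association means $X_t$ conditioned on $\{X_s=x\}$ is an associated random vector, and Proposition \ref{propdepmap} says an associated random vector is WA, PSA, PSD, POD, PUOD, and PLOD; applying this conditionally for every $s\leq t$ and every $x$ gives each of the spatial versions of these properties. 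Step 3: each of WA, PSA, PSD, POD $\Rightarrow$ spatial PUOD. Again this is Proposition \ref{propdepmap} applied conditionally (POD is PUOD and PLOD by Definition \ref{def:pod}; WA, PSA, PSD each sit above PUOD in Figure \ref{fig:prop2_1}), together with PLOD $\Rightarrow$ PUOD if needed — actually PLOD alone does not imply PUOD, so I would instead close the cycle through both orthant conditions separately. Step 4: spatial PUOD $\Rightarrow$ condition \eqref{resnicktimespace}, which is exactly Theorem \ref{thm:inhomoPUODnec}; and spatial PLOD $\Rightarrow$ condition \eqref{resnicktimespace}, which is the Remark following Theorem \ref{thm:inhomoPUODnec}. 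Combining: \eqref{resnicktimespace} $\Rightarrow$ association $\Rightarrow$ (any of WA, PSA, PSD, POD, PUOD, PLOD), and each of these $\Rightarrow$ \eqref{resnicktimespace} (the POD/PUOD/WA/PSA/PSD branch via Theorem \ref{thm:inhomoPUODnec} after noting each implies spatial PUOD; the PLOD branch via the Remark). Hence all eight statements are equivalent.

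The one place requiring care — the "main obstacle," though it is a small one — is Step 3, making sure the logical cycle actually closes without gaps. PLOD does not imply PUOD, so I cannot simply funnel everything through PUOD; the fix is to handle PLOD via the Remark after Theorem \ref{thm:inhomoPUODnec} and route the remaining five properties (WA, PSA, PSD, POD, PUOD) to \eqref{resnicktimespace} through spatial PUOD. Each of WA, PSA, PSD, POD does imply PUOD by Figure \ref{fig:prop2_1} (and POD implies PUOD by definition), so this works. A secondary point worth a sentence: the passage from "random-vector" positive dependence to "spatial" positive dependence of the process is purely a matter of applying the finite-dimensional implications under the conditional law $\dP(\,\cdot\mid X_s=x)$ for each $s\leq t$ and $x$, exactly as set up in Remark \ref{rem:spatassoc}; no new analysis is needed. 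I would therefore write the corollary's proof as a short paragraph citing Theorem \ref{thm:inhomoassocjump}, Theorem \ref{thm:inhomoPUODnec}, its following Remark, and Proposition \ref{propdepmap}, and drawing the implication cycle explicitly.
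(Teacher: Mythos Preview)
Your proposal is correct and follows essentially the same approach as the paper: the paper provides no formal proof beyond pointing to Figure~\ref{fig:dep_map} and citing Theorems~\ref{thm:inhomoassocjump} and~\ref{thm:inhomoPUODnec}, which is precisely the cycle you spell out (with Proposition~\ref{propdepmap} supplying the downward implications and the Remark after Theorem~\ref{thm:inhomoPUODnec} handling the PLOD branch). Your explicit care in routing PLOD separately through the Remark rather than through PUOD is exactly the right observation and makes the argument more complete than the paper's terse treatment.
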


The equivalences in Corollary \ref{CorEquiv} are presented in Figure \ref{fig:dep_map}. The dashed lines are the implications proven in this paper in Theorems \ref{thm:inhomoassocjump} and \ref{thm:inhomoPUODnec}.
\begin{center}
\begin{figure}[h]
  \centering
  \includegraphics[trim = {0cm 0cm 0cm 0cm}, scale=.7]{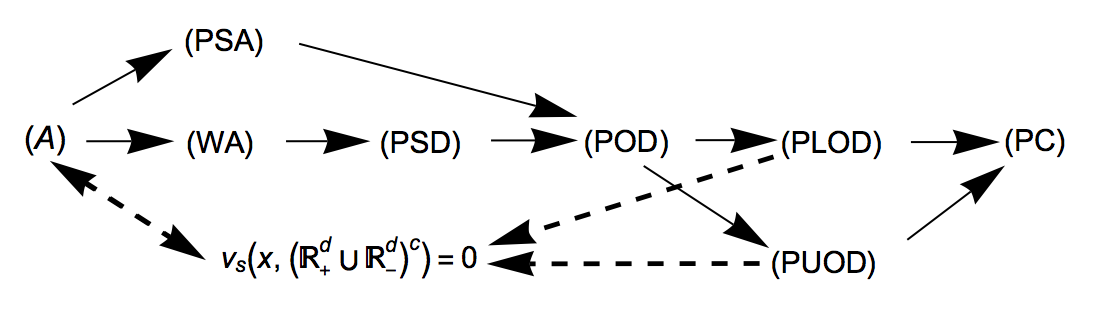}\\
  \caption{Equivalence of dependencies under condition \eqref{resnicktimespace} for FEPs}\label{fig:dep_map}
\end{figure}
\end{center}

\section{Applications and examples}
\label{sec:examples}
 
We first present in Section \ref{sec:additive} an important example of time-inhomogeneous Markov processes, called additive processes. These are also called time-inhomogeneous \Levy processes, and a nice sub-class of processes with independent increments (PII). Such processes are useful in financial models, such as stochastic volatility models with jumps (see \cite[Ch.15]{Cont2004}). In Section \ref{sec:comparison}, we show an application of the technique of transformation of time-inhomogeneous to time-homogeneous Markov processes in comparison theorems. 

\subsection{Additive processes}
\label{sec:additive}

A \textbf{process with independent increments (PII)} is a stochastic process $X=(X_t)_{t\geq0}$ on sample space $(\Omega,\mcl{G},(\mcl{G}_t)_{t\geq0},\dP)$ such that $X$ is c\`{a}dl\`{a}g, adapted, with $X_0=0$ a.s. and for all $s\leq t$, $X_t - X_s$ is independent of $\mcl{F}_s$. These processes and their semimartingale nature are be described in Jacod and Shiryaev \cite[Ch.II]{Jacod2003}. 

\begin{definition}
{\rm
If process $X=(X_t)_{t\geq0}$ on $\R^d$ is an \textbf{additive process} if it is a PII and satisfies stochastic continuity, i.e. $\displaystyle \lim_{h\searrow0} \dP(|X_{t+h} - X_t|\geq a) = 0$, for all $a>0$, $t\geq0$.
}
\end{definition}

Thus, observe that one can obtain additive processes by relaxing ``stationary increments" in the definition of a \Levy process. The following is a theorem found in Sato's book \cite{Sato1999} that tells us that additive processes still have ``infinitely divisible-like" behavior.

\begin{theorem}[Sato, \cite{Sato1999}, p.47]
Let $X=(X_t)_{t\geq0}$ be an additive process on $\R^d$. Then $X_t$ is infinitely divisible for all $t\geq0$, and $\phi_{X_t}(u) = \exp(p_t(u))$, where 
\begin{equation*}
p_t(u) = i u\cdot b_t  - \frac{1}{2} u\cdot \Sigma_t u + \int_{\R^d\setminus\{0\}} (e^{iu\cdot y} - 1 - iu\cdot y \chi(y)) \nu_t(dy)
\end{equation*}
is the symbol, where for all $t\geq0$, $\Sigma_t$ is a symmetric positive definite $d\times d$ matrix, $\nu_t$ is a \Levy measure, and $b_t\in \R^d$. 
\end{theorem}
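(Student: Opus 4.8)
The plan is to verify Sato's characterization for additive processes by appealing to the classical Lévy–Khintchine representation of infinitely divisible distributions, together with the independent-increments property and stochastic continuity. First I would show that $X_t$ is infinitely divisible for every $t \geq 0$: fix $t$ and $n \in \N$, partition $[0,t]$ into $n$ equal subintervals, and write $X_t = \sum_{k=1}^{n} (X_{kt/n} - X_{(k-1)t/n})$ as a sum of $n$ independent increments. This exhibits $X_t$ as an $n$-fold sum of independent random variables for every $n$, but to get \emph{infinite divisibility} in the strict sense one needs the summands to be i.i.d.\ \emph{in distribution}; since the increments need not be stationary, I would instead invoke the standard fact (Sato, \cite{Sato1999}, Thm.~9.1 or its proof) that a stochastically continuous process with independent increments has infinitely divisible one-dimensional marginals — the stochastic continuity forces a uniform-null-array / triangular-array argument, so $X_t$ arises as the limit in distribution of row sums of a null array of independent variables, and such limits are exactly the infinitely divisible laws.

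Next, given that $X_t$ is infinitely divisible, the Lévy–Khintchine theorem gives, for each fixed $t$, a representation
\begin{equation*}
\phi_{X_t}(u) = \exp\!\left( i u\cdot b_t - \tfrac12 u\cdot \Sigma_t u + \int_{\R^d\setminus\{0\}} \bigl(e^{iu\cdot y} - 1 - iu\cdot y\,\chi(y)\bigr)\,\nu_t(dy) \right),
\end{equation*}
with $b_t \in \R^d$, $\Sigma_t$ symmetric positive semidefinite, and $\nu_t$ a Lévy measure (i.e.\ $\nu_t(\{0\})=0$ and $\int (1\wedge|y|^2)\,\nu_t(dy) < \infty$). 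The uniqueness part of Lévy–Khintchine makes the assignment $t \mapsto (b_t,\Sigma_t,\nu_t)$ well-defined. One then sets $p_t(u)$ to be the exponent, so that $\phi_{X_t}(u) = \exp(p_t(u))$, which is precisely the claimed formula. I would remark that positive \emph{semi}definiteness is what Lévy–Khintchine delivers; the statement's ``positive definite'' should be read in that weak sense (or one simply quotes Sato's convention).

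The only real content beyond citing Lévy–Khintchine is the first step — establishing infinite divisibility of the marginals — and the main obstacle there is handling non-stationarity: one cannot directly say ``$X_t$ is a sum of $n$ i.i.d.\ pieces.'' The clean route is to use stochastic continuity to show that for each $t$ the partition increments $\{X_{kt/n}-X_{(k-1)t/n}\}_{k=1}^n$ form a \emph{null array} (each increment converges to $0$ in probability uniformly in $k$ as $n\to\infty$, by uniform continuity of $u\mapsto \dP(|X_u - X_v|\geq a)$ on the compact $[0,t]$), and then apply the general limit theorem that row sums of null arrays of independent random variables have infinitely divisible limits (Sato, \cite{Sato1999}, Thm.~9.3). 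Since the statement is explicitly attributed to Sato's book, in the write-up I would simply cite \cite[Thm.~9.1, Thm.~9.8]{Sato1999} for both the infinite divisibility and the resulting Lévy–Khintchine form, keeping the proof short; the sketch above records why those results apply in our setting.
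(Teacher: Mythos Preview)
The paper does not actually prove this theorem: it is stated as a citation from Sato's book \cite[p.~47]{Sato1999} with no accompanying proof, so there is nothing in the paper to compare your proposal against. Your sketch is the standard argument (null-array of independent increments via stochastic continuity, then L\'evy--Khintchine), and it is essentially what Sato does; for the purposes of this paper a one-line reference to \cite{Sato1999} is all that is required.
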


Stochastic continuity of $X$ yields continuity in $t$ of characteristics $(b_t, \Sigma_t, \nu_t)$ and of the characteristic exponent $p_t$. 

\begin{theorem}[Sato, \cite{Sato1999}, p.52]
\label{propadditive}
An additive process $X$ with characteristics $(b_t, \Sigma_t, \nu_t)$ satisfies
\begin{itemize}[itemsep=.5pt]
\item Positiveness: $b_0=0$, $\Sigma_0=0$, $\nu_0=0$, and for all $s\leq t$, $\Sigma_t - \Sigma_s$ is a positive definite matrix, and $\nu_t(B) \geq \nu_s(B)$ for all $B\in\mcl{B}(\R^d)$.
\item Continuity: if $s\rightarrow t$, then $\Sigma_s\rightarrow \Sigma_t$, $b_s\rightarrow b_t$, and $\nu_s(B) \rightarrow \nu_t(B)$ for all $B\in\mcl{B}(\R^d)$ such that $B \subseteq \{x: |x|\geq \epsilon\}$ for some $\epsilon>0$.
\end{itemize}
\end{theorem}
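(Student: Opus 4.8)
The statement to prove is Theorem~\ref{propadditive}, which asserts positiveness and continuity properties of the characteristics $(b_t, \Sigma_t, \nu_t)$ of an additive process $X$. The natural strategy is to exploit the independent increments structure together with uniqueness in the L\'evy--Khintchine representation. The plan is as follows.

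First I would establish that for $s \leq t$, the characteristic function of the increment $X_t - X_s$ is $\phi_{X_t}(u)/\phi_{X_s}(u) = \exp(p_t(u) - p_s(u))$; this follows from independence of increments, since $\phi_{X_t}(u) = \E[e^{iu\cdot(X_t - X_s)}]\,\phi_{X_s}(u)$, and one checks $\phi_{X_s}(u) \neq 0$ for all $u$ (infinite divisibility guarantees the characteristic function never vanishes). Hence $X_t - X_s$ is itself infinitely divisible, with characteristic exponent $p_t(u) - p_s(u)$. Next, because $X_t - X_s$ is a genuine random variable, its characteristic exponent must be a bona fide L\'evy--Khintchine exponent, so $p_t(u) - p_s(u)$ has the L\'evy--Khintchine form with some triplet $(b_t - b_s,\ A_{s,t},\ \mu_{s,t})$ where $A_{s,t}$ is a nonnegative-definite symmetric matrix and $\mu_{s,t}$ is a genuine L\'evy measure. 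By uniqueness of the L\'evy--Khintchine decomposition, one matches terms: the Gaussian part of $p_t - p_s$ is $\Sigma_t - \Sigma_s$, forcing $\Sigma_t - \Sigma_s = A_{s,t} \succeq 0$; the jump part is $\nu_t - \nu_s$, forcing $\nu_t(B) - \nu_s(B) = \mu_{s,t}(B) \geq 0$ for all Borel $B$; and the drift part gives $b_t - b_s$. Taking $s = 0$ and using $X_0 = 0$ a.s.\ (so $\phi_{X_0} \equiv 1$, i.e.\ $p_0 \equiv 0$) yields $b_0 = 0$, $\Sigma_0 = 0$, $\nu_0 = 0$. This gives the positiveness assertions.

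For the continuity assertion, I would use stochastic continuity: $X_{t+h} \to X_t$ in probability as $h \searrow 0$ (and similarly $X_{t-h} \to X_t$), hence $X_s \to X_t$ in distribution as $s \to t$, so $\phi_{X_s}(u) \to \phi_{X_t}(u)$ pointwise, i.e.\ $p_s(u) \to p_t(u)$ for every $u$. Then one invokes the standard continuity theorem for infinitely divisible laws (e.g.\ Sato, Theorem~8.7): convergence in distribution of infinitely divisible random variables is equivalent to convergence of the triplets in the appropriate sense, namely $b_s \to b_t$, $\Sigma_s \to \Sigma_t$, and $\nu_s \to \nu_t$ weakly on sets bounded away from $0$ (more precisely, $\int g\, d\nu_s \to \int g\, d\nu_t$ for bounded continuous $g$ vanishing near $0$, which gives $\nu_s(B) \to \nu_t(B)$ for continuity sets $B \subseteq \{|x| \geq \epsilon\}$). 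Combined with the monotonicity $\nu_s \leq \nu_t$ already established, one upgrades this to $\nu_s(B) \to \nu_t(B)$ for all Borel $B$ bounded away from $0$.

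The main obstacle is the bookkeeping around the cutoff function $\chi$ in the L\'evy--Khintchine representation: subtracting $p_s$ from $p_t$ is clean for the Gaussian and drift-after-compensation terms, but one must be careful that the compensation terms $iu\cdot y\,\chi(y)$ appearing in $p_t$ and $p_s$ cancel correctly so that the difference really is a L\'evy--Khintchine exponent with L\'evy measure $\nu_t - \nu_s$ and the \emph{same} cutoff; since $\chi$ is fixed throughout this is straightforward but deserves a line of care. The other technical point is quoting the right version of the continuity theorem for infinitely divisible distributions in a form that delivers exactly the set-convergence $\nu_s(B) \to \nu_t(B)$ on sets bounded away from the origin; this is standard (it is in Sato's book, which is already being cited) so I would simply reference it rather than reprove it.
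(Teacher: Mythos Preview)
The paper does not supply its own proof of Theorem~\ref{propadditive}; it is quoted verbatim as a result from Sato \cite[p.~52]{Sato1999} and used as a black box (notably in the proof of Theorem~\ref{thm:resnickadditive}). So there is no ``paper's proof'' to compare against.

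That said, your outline is correct and is essentially the argument Sato gives: independence of increments yields that $X_t - X_s$ is infinitely divisible with exponent $p_t - p_s$, and uniqueness of the L\'evy--Khintchine triplet forces $\Sigma_t - \Sigma_s \succeq 0$ and $\nu_t - \nu_s \geq 0$; stochastic continuity plus the convergence-of-types theorem for infinitely divisible laws (Sato, Thm.~8.7) then gives the continuity of the triplet. Your caveats about the cutoff function and about upgrading weak convergence of $\nu_s$ to setwise convergence on $\{|x|\geq\epsilon\}$ via monotonicity are exactly the right technical points to flag. Since the paper only cites the result, your sketch goes strictly beyond what the paper does.
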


\begin{corollary}
Let $X$ be an additive process with characteristic exponents $p_t$. Then $p_t(u)$ is continuous in $t$ for all $u\in\R^d$.
\end{corollary}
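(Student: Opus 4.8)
The plan is to derive the continuity of $p_t(u)$ in $t$ directly from the continuity properties of the characteristics $(b_t, \Sigma_t, \nu_t)$ recorded in Theorem \ref{propadditive}. Recall that
\[
p_t(u) = iu\cdot b_t - \tfrac{1}{2} u\cdot \Sigma_t u + \int_{\R^d\setminus\{0\}} \left(e^{iu\cdot y} - 1 - iu\cdot y\chi(y)\right)\nu_t(dy),
\]
so it suffices to show that each of the three terms is continuous in $t$ for fixed $u\in\R^d$. The first two terms are immediate: $b_s\to b_t$ and $\Sigma_s\to\Sigma_t$ as $s\to t$ by the continuity clause of Theorem \ref{propadditive}, and the maps $b\mapsto iu\cdot b$ and $\Sigma\mapsto -\tfrac12 u\cdot\Sigma u$ are linear (hence continuous). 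The only real work is the integral term, because the continuity of $\nu_s$ given in Theorem \ref{propadditive} is only asserted on sets bounded away from the origin, so one must control the contribution near $y=0$ uniformly in $s$ over a neighborhood of $t$.

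First I would split the integral at $|y| = \epsilon$ for a small $\epsilon \in (0,1)$, writing the integrand as $g_u(y) := e^{iu\cdot y} - 1 - iu\cdot y\chi(y)$. On the region $\{|y|\geq\epsilon\}$ the cutoff $\chi$ vanishes (for $\epsilon<1$ one still has $\chi = \mathbbm 1_{(0,1)}(|y|)$, so I would actually split so that on $\{|y|\ge 1\}$ we have $g_u(y) = e^{iu\cdot y}-1$, bounded and continuous, and on $\{\epsilon\le |y|<1\}$ we have $g_u$ bounded and continuous as well); since $g_u$ is bounded on $\{|y|\ge\epsilon\}$ and $\nu_s\to\nu_t$ on every Borel subset of $\{|y|\ge\epsilon\}$ with $\nu_t(\{|y|=\epsilon\})$ controllable by choosing $\epsilon$ outside the countable set of atoms of the finite measure $\nu_t|_{\{|y|\ge 1\}}$ and using monotonicity, a standard weak-convergence / bounded-convergence argument gives
\[
\int_{|y|\ge\epsilon} g_u(y)\,\nu_s(dy) \longrightarrow \int_{|y|\ge\epsilon} g_u(y)\,\nu_t(dy)
\quad\text{as } s\to t.
\]
For the region $\{0<|y|<\epsilon\}$ I would use the elementary estimate $|g_u(y)| = |e^{iu\cdot y}-1-iu\cdot y| \le \tfrac12|u|^2|y|^2$ (valid for $|y|<1$ where $\chi(y)=1$), so that
\[
\left|\int_{0<|y|<\epsilon} g_u(y)\,\nu_s(dy)\right| \le \tfrac12|u|^2 \int_{0<|y|<\epsilon} |y|^2\,\nu_s(dy).
\]
The main obstacle is making this last bound small uniformly in $s$ near $t$: by the Positiveness clause, $\nu_s(B)\le\nu_t(B)$ whenever $s\le t$, so for $s\in[0,t]$ the tail integral is dominated by $\int_{0<|y|<\epsilon}|y|^2\,\nu_t(dy)$, which $\to 0$ as $\epsilon\to 0$ since $\nu_t$ is a \Levy measure; for $s\in[t,t+\delta]$ one uses monotonicity in the other direction to dominate by $\int_{0<|y|<\epsilon}|y|^2\,\nu_{t+\delta}(dy)$, again small for small $\epsilon$ (uniformly in $s$ in that range). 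Combining: given $\eta>0$, first choose $\epsilon$ so both small-ball contributions are $<\eta/3$, then choose $|s-t|$ small so the $\{|y|\ge\epsilon\}$ parts differ by $<\eta/3$; this yields $|p_s(u)-p_t(u)|<\eta$, i.e. continuity in $t$.

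An alternative, slicker route would be to invoke stochastic continuity of $X$ directly: $X_s \to X_t$ in probability as $s\to t$ implies $\phi_{X_s}(u) = \E e^{iu\cdot X_s} \to \E e^{iu\cdot X_t} = \phi_{X_t}(u)$ by bounded convergence, and since $\phi_{X_t}(u) = \exp(p_t(u))$ with $p_t(u)$ never zero (it is a characteristic exponent, so $\phi_{X_t}(u)\ne 0$) and $t\mapsto p_t(u)$ admitting a continuous branch of the logarithm along the path — this is essentially the content of Sato's construction — one concludes $p_t(u)$ is continuous in $t$. I would likely present the first (direct) argument as the main proof since it is self-contained given Theorem \ref{propadditive}, and mention the second as a remark. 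The expected main difficulty, as noted, is the uniform-in-$s$ control of the small-jump part of the \Levy measure, which is handled cleanly by the monotonicity $\nu_s \le \nu_t$ for $s\le t$ provided by the Positiveness property.
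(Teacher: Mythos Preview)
The paper does not actually give a proof of this corollary; it is stated immediately after Theorem~\ref{propadditive} and left as an unproven consequence, with the preceding sentence ``Stochastic continuity of $X$ yields continuity in $t$ of characteristics $(b_t, \Sigma_t, \nu_t)$ and of the characteristic exponent $p_t$'' serving as the only justification. Your proposal is correct and supplies precisely the details the paper omits. Your first (direct) argument is the natural elaboration of what the paper implicitly intends by placing the corollary after Theorem~\ref{propadditive}: you show each term of the \Levy--Khintchine representation is continuous in $t$, with the only genuine work being the uniform control of the small-jump integral via the monotonicity $\nu_s \le \nu_{t'}$ for $s\le t'$ from the Positiveness clause. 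Your alternative route via stochastic continuity and characteristic functions is exactly the content of the sentence the paper places before Theorem~\ref{propadditive}, so both of your approaches are in fact suggested (but not carried out) by the paper. Either argument would be acceptable; the first is more self-contained given the stated theorem, as you note.
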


Additive processes can also be viewed from the perspective of Markov processes. These processes are time-inhomogeneous, spatially-homogeneous Markov processes, with Markov evolution $(T_{s,t})_{s\leq t}$ given by 
\begin{equation}
\label{evoadditive}
T_{s,t} f(x) = \E (f(X_t)|X_s=x) = \E f(X_t - X_s +x).
\end{equation}
Such Markov evolutions are also strongly continuous on $C_0(\R^d)$. 

\begin{theorem}
Let $X$ be an additive process with Markov evolution $(T_{s,t})_{s\leq t}$ defined by \eqref{evoadditive}. Then $(T_{s,t})_{s\leq t}$ is strongly continuous, thus making $(T_{s,t})_{s\leq t}$  a Feller evolution system.
\end{theorem}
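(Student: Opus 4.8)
The plan is to show that the Markov evolution $(T_{s,t})_{s\leq t}$ defined by \eqref{evoadditive} satisfies the strong continuity condition, namely that for every pair $0\leq s\leq t<\infty$ and every $f\in C_0(\R^d)$, one has $\lim_{(u,v)\rightarrow(s,t)}\|T_{u,v}f - T_{s,t}f\|_\infty = 0$. The key analytic input is the stochastic continuity of $X$ together with the spatial homogeneity captured in \eqref{evoadditive}: since $T_{u,v}f(x) = \E f(X_v - X_u + x)$, the operator $T_{u,v}$ is nothing but convolution of $f$ with the law $\mu_{u,v}$ of the increment $X_v - X_u$. So the statement reduces to a continuity property of the family of increment distributions $\{\mu_{u,v}\}$ in the variables $(u,v)$, under weak convergence.

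First I would record that for a PII the increment $X_v - X_u$ has an infinitely divisible law whose characteristic function is $\exp(p_v(\xi) - p_u(\xi))$ (using the notation of the Sato theorems already quoted), and that by the Corollary just stated $p_t(\xi)$ is continuous in $t$ for every fixed $\xi$. Hence, as $(u,v)\to(s,t)$, the characteristic functions of $\mu_{u,v}$ converge pointwise to that of $\mu_{s,t}$, and by Lévy's continuity theorem $\mu_{u,v}\Rightarrow \mu_{s,t}$ weakly. (One can instead argue directly from stochastic continuity: $X_v - X_u \to X_t - X_s$ in probability — splitting $X_v - X_u = (X_v - X_t) + (X_t - X_s) + (X_s - X_u)$ and using that each outer increment goes to $0$ in probability by stochastic continuity plus independence — which again gives weak convergence of the laws.) Then for fixed $f\in C_0(\R^d)$, weak convergence $\mu_{u,v}\Rightarrow\mu_{s,t}$ gives $T_{u,v}f(x) \to T_{s,t}f(x)$ for each $x$; the step that needs care is upgrading this pointwise convergence to convergence in $\|\cdot\|_\infty$.

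For the uniform upgrade I would exploit that $f\in C_0(\R^d)$ is uniformly continuous and vanishes at infinity, and that weak convergence of the $\mu_{u,v}$ along any sequence is tight, so a single compact set $K$ carries mass $\geq 1-\varepsilon$ uniformly. Writing $T_{u,v}f(x) - T_{s,t}f(x) = \int (f(x+y) - f(x+z))$ integrated against a coupling, or more simply estimating $|T_{u,v}f(x) - T_{s,t}f(x)| \le \int |f(x+y)|\,|\mu_{u,v} - \mu_{s,t}|(dy)$-type bounds, one splits into the contribution from $y$ in a large ball (handled by uniform continuity of $f$ and tightness) and the tail (handled because $f$ is small at infinity uniformly in the shift, again using tightness of the increment laws near the limit). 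Concretely: given $\varepsilon$, pick $R$ with $|f|<\varepsilon$ outside the ball of radius $R$ and with $\mu_{u,v}(\{|y|>R\})<\varepsilon$ for $(u,v)$ near $(s,t)$; on $\{|y|\le R\}$ use uniform continuity of $f$ together with convergence of $\mu_{u,v}$ restricted to this compact set. This yields $\sup_x|T_{u,v}f(x) - T_{s,t}f(x)|\to 0$. I expect this uniform-in-$x$ passage — reconciling weak convergence (which is only pointwise/bounded-continuous) with the sup-norm — to be the main obstacle, and the decisive tools are the uniform-continuity and vanishing-at-infinity properties built into $C_0(\R^d)$ plus tightness of the increment laws. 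Finally, since $T_{s,t}$ is already a positivity-preserving contraction satisfying Chapman–Kolmogorov and mapping $C_0(\R^d)$ into itself (convolution with a probability measure preserves $C_0$), strong continuity is the only remaining hypothesis, so establishing it completes the identification of $(T_{s,t})_{s\leq t}$ as a Feller evolution system.
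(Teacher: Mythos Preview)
The paper does not actually supply a proof of this statement; it simply refers the reader to \cite[Thm.~4.13]{TuThesis}. So there is no ``paper's own proof'' to compare against, and your proposal stands on its own.

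Your overall strategy is correct and standard: identify $T_{u,v}f$ as convolution of $f$ with the increment law $\mu_{u,v}$, establish $\mu_{u,v}\Rightarrow\mu_{s,t}$ via continuity of $t\mapsto p_t(\xi)$ (or directly via stochastic continuity), and then upgrade to sup-norm convergence using the structure of $C_0(\R^d)$. One remark on the uniform upgrade: the splitting you describe (choosing $R$ so that $|f|<\varepsilon$ outside $B_R$) is not quite the right cut, since in $f(x+y)$ the variable $x$ is unrestricted and the smallness of $f$ at infinity does not localize in $y$. The clean way to finish is to stay on the original probability space, where you already observed $(X_v-X_u)-(X_t-X_s)\to 0$ in probability: for $\delta>0$ coming from uniform continuity of $f$,
\[
\sup_{x}\bigl|T_{u,v}f(x)-T_{s,t}f(x)\bigr|\le \varepsilon + 2\|f\|_\infty\,\dP\bigl(|(X_v-X_u)-(X_t-X_s)|\ge\delta\bigr),
\]
and the second term vanishes. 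Alternatively, your sketch can be salvaged by noting that the family $\{y\mapsto f(x+y):x\in\R^d\}$ is uniformly bounded and uniformly equicontinuous, so weak convergence of $\mu_{u,v}$ to $\mu_{s,t}$ is automatically uniform over this family (tightness plus Arzel\`a--Ascoli). Either route closes the argument.
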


\begin{proof}
For a proof, see \cite[Thm. 4.13]{TuThesis}. 
\end{proof}

Thus, additive processes are a subclass of Feller evolution processes. It is shown in Cont and Tankov \cite[Ch.14]{Cont2004} that the generators $\mcl{A}_s$ of an additive process has the form 
\begin{equation}
\label{generatoradditive}
\mcl{A}_s f (x) = b_s\cccdot \nabla f(x) + \frac{1}{2} \nabla \cccdot \Sigma_s \nabla f(x) + \int_{\R^d\setminus\{0\}} (f(x+y) - f(x) -y\cccdot f(x) \chi(y))\nu_s(dy)
\end{equation}
for $f\in C_0^2(\R^d)$. Thus the symbol of the operator $\mcl{A}_s$ coincides with the characteristic exponent $p_s(\xi)$, which is analogous to the relationship between symbols and characteristic exponents of \Levy processes. Hence, the additive process has an extended generator, which is an integro-differential operator $I(p_s)$ on $C_b^2(\R^d)$ defined by the RHS of \eqref{generatoradditive}. Therefore, additive processes are FEPs with symbols $p_s(\xi)$ and characteristics $(b_s, \Sigma_s, \nu_s)$ that do not depend on $x$, i.e. the state space. They can be classified as FEPs that are spatially homogeneous.

\par

Moreover, their FESs $(T_{s,t})_{s\leq t}$ are always stochastically monotone: if $x\leq y$ and $f\in B_b(\R^d)\cap \mcl{F}_i$, then
$$T_{s,t} f(x) = \E f(X_t - X_s +x) \leq \E f(X_t-X_s + y) = T_{s,t} f(y).$$
Hence, we can apply Theorems \ref{thm:inhomoliggett}-\ref{thm:inhomoPUODnec} to additive processes! 

\begin{theorem}
\label{thm:resnickadditive}
Let $X=(X_t)_{t\geq0}$ be an additive process with symbols $p_s(\xi)$ and characteristic triplets $(b_s, 0, \nu_s)$. Then $X$ is spatially associated if and only if 
\begin{equation}
\label{eq:resnickadditive}
\nu_s((\R_+^d \cup \R_-^d)^c)=0, \hspace{.4cm} \forall s\in\Q_+.
\end{equation}
\end{theorem}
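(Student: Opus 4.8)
The plan is to deduce Theorem~\ref{thm:resnickadditive} directly from Theorem~\ref{thm:inhomoassocjump}, using the fact (established just above) that an additive process is a stochastically monotone FEP whose characteristics $(b_s,0,\nu_s)$ do not depend on $x$. The only genuine gap between the two statements is that Theorem~\ref{thm:inhomoassocjump} gives the \Levy measure condition for \emph{all} $s\geq0$, whereas here we only want to quantify over $s\in\Q_+$; so the real content is a density/continuity argument showing that the rational times suffice.

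First I would check the hypotheses of Theorem~\ref{thm:inhomoassocjump}: additive processes are FEPs (cited above), their generators have the form \eqref{generatoradditive} with $C_c^\infty(\R^d)$ in the domain, and the symbols $p_s(\xi)$ coincide with the characteristic exponents, which by the Corollary following Theorem~\ref{propadditive} are continuous in $s$ (hence $s$-continuous) and, being L\'evy--Khintchine exponents with the requisite boundedness, are bounded in the sense of \eqref{inhomobounded}; stochastic monotonicity was verified in the paragraph preceding the theorem. Thus Theorem~\ref{thm:inhomoassocjump} applies and gives: $X$ is spatially associated iff $\nu_s((\R_+^d\cup\R_-^d)^c)=0$ for all $s\geq0$.

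It remains to show that ``for all $s\geq0$'' can be replaced by ``for all $s\in\Q_+$''. One direction is trivial. For the converse, suppose $\nu_s((\R_+^d\cup\R_-^d)^c)=0$ for every rational $s$, and fix an arbitrary real $t\geq0$ and a closed set $B\subseteq(\R_+^d\cup\R_-^d)^c$ bounded away from $0$, say $B\subseteq\{|x|\geq\epsilon\}$. Pick rationals $s_n\downarrow t$ (or $s_n\to t$); by the Continuity part of Theorem~\ref{propadditive}, $\nu_{s_n}(B)\to\nu_t(B)$, and each $\nu_{s_n}(B)=0$, so $\nu_t(B)=0$. Exhausting $(\R_+^d\cup\R_-^d)^c$ by an increasing sequence of such sets $B_k=(\R_+^d\cup\R_-^d)^c\cap\{|x|\geq 1/k\}$ and using monotone convergence gives $\nu_t((\R_+^d\cup\R_-^d)^c)=0$ for all real $t\geq0$, which is exactly condition \eqref{resnicktimespace}. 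Then Theorem~\ref{thm:inhomoassocjump} yields spatial association.

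The main obstacle is the bookkeeping in this last step: the Continuity statement in Theorem~\ref{propadditive} only guarantees convergence $\nu_{s}(B)\to\nu_t(B)$ for sets $B$ bounded away from the origin, so one cannot apply it directly to $(\R_+^d\cup\R_-^d)^c$ itself; the exhaustion by the $B_k$ (and noting $(\R_+^d\cup\R_-^d)^c$ has no mass at $0$ since $\nu_t$ is a \Levy measure on $\R^d\setminus\{0\}$) is the device that closes this. Everything else is a direct citation of Theorem~\ref{thm:inhomoassocjump} and the structural facts about additive processes recorded in this section.
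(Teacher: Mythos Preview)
Your proposal is correct and follows essentially the same approach as the paper: both reduce to Theorem~\ref{thm:inhomoassocjump} by using the continuity clause of Theorem~\ref{propadditive} to pass from $s\in\Q_+$ to all $s\geq0$. The only cosmetic difference is that the paper argues by contradiction (if $\nu_{t_0}$ charges some set $(a,\infty)\times(-\infty,-a)$ bounded away from the origin, continuity forces a nearby rational time to do so as well), whereas you argue directly and then exhaust $(\R_+^d\cup\R_-^d)^c$ by sets bounded away from $0$; the contrapositive formulation spares the exhaustion step, but the content is identical.
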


\begin{proof}
Notice that this is a slightly weaker assumption than the statement of Theorem \ref{thm:inhomoassocjump}. This is because in the case of additive processes, $\nu_s((\R_+^d \cup \R_-^d)^c)=0$,  $\forall s\in\Q_+$ implies $\nu_s((\R_+^d \cup \R_-^d)^c)=0$,  $\forall s\in\R_+$. We show this in $d=2$. 

\par

 Assume for contradiction that there is $t_0\in \R_+\setminus\Q_+$, such that $\nu_{t_0} ((\R_+^2 \cup\R_-^2)^c)>0$. WLOG, say $\nu_{t_0} ((0,\infty)\times (-\infty,0))>0$. By continuity of measure, there exists $a>0$ such that $\nu_{t_0}((a,\infty)\times (-\infty,-a))>0$. By Theorem \ref{propadditive}, since $A=(a,\infty)\times (-\infty,-a)$ is bounded away from $0$, there exists $(t_n)_{n\in\N}\subset \Q_+$ such that $t_n\rightarrow t_0$ and $$\nu_{t_n}((a,\infty)\times (-\infty,-a))\rightarrow
\nu_{t_0} ((a,\infty)\times (-\infty,-a))>0, \hspace{.5cm} \text{as} \hspace{.5cm}  n\rightarrow\infty.$$
Therefore, there exists $N$ large such that for all $n\geq N$, $\nu_{t_n} ((a,\infty)\times (-\infty,-a))>0$, which is a contradiction.  Hence, $\nu_t((\R_+^2 \cup \R_-^2)^c)=0$ for all $t\geq0$, which is equivalent to $X$ being spatially associated by Theorem \ref{thm:inhomoassocjump}. 
\end{proof}

\begin{corollary}
Let $X=(X_t)_{t\geq0}$ be an additive process with symbols $p_s(\xi)$ and characteristic triplets $(b_s, 0, \nu_s)$. Then $X$ is spatially PUOD (and also PLOD, POD, PSD, PSA, WA) if and only if $\nu_s$ satisfies \eqref{eq:resnickadditive}. 
\end{corollary}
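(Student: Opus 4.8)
The plan is to deduce this directly from Corollary \ref{CorEquiv} combined with the rational-time reduction already performed in the proof of Theorem \ref{thm:resnickadditive}. First I would record that an additive process $X$ with characteristic triplet $(b_s, 0, \nu_s)$ meets all the hypotheses of Corollary \ref{CorEquiv}: it is a FEP with a stochastically monotone FES (the monotonicity being the one-line computation $T_{s,t}f(x) = \E f(X_t - X_s + x) \leq \E f(X_t - X_s + y) = T_{s,t}f(y)$ noted just before Theorem \ref{thm:resnickadditive}); its generators $\mcl{A}_s$ have the integro-differential form \eqref{generatoradditive} with $C_c^\infty(\R^d)$ a core and hence rich domain; and its symbols coincide with the characteristic exponents $p_s(\xi)$, which are $s$-continuous by the Corollary following Theorem \ref{propadditive} and satisfy the growth bound \eqref{inhomobounded} by the standard L\'evy--Khintchine estimate (uniformity in $s$ following from the continuity and positiveness properties of Theorem \ref{propadditive}, if necessary after restricting to a compact time horizon). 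This is exactly the point already asserted in the text in the sentence ``we can apply Theorems \ref{thm:inhomoliggett}--\ref{thm:inhomoPUODnec} to additive processes''.

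Granting those checks, Corollary \ref{CorEquiv} applies verbatim and yields that the condition $\nu_s((\R_+^d\cup\R_-^d)^c) = 0$ for all $s\geq0$ is equivalent to $X$ being spatially PUOD, and likewise equivalent to PLOD, POD, PSD, PSA, and WA (there is no state dependence here since $\nu_s$ does not depend on $x$). It then remains only to see that this ``for all $s\geq0$'' condition is the same as \eqref{eq:resnickadditive}, i.e. the a priori weaker ``for all $s\in\Q_+$'' version. One direction is trivial; for the other I would reuse the density argument from the proof of Theorem \ref{thm:resnickadditive}: if $\nu_{t_0}((\R_+^d\cup\R_-^d)^c)>0$ for some $t_0\in\R_+\setminus\Q_+$, then by continuity of measure some orthant-complement ``corner'' set $A$ bounded away from the origin has $\nu_{t_0}(A)>0$; by the continuity property in Theorem \ref{propadditive}, choosing $\Q_+\ni t_n\to t_0$ gives $\nu_{t_n}(A)\to\nu_{t_0}(A)>0$, so $\nu_{t_n}(A)>0$ for large $n$, contradicting \eqref{eq:resnickadditive}.

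I do not expect a genuine obstacle here: all the analytic work sits in Corollary \ref{CorEquiv} (which in turn rests on Theorems \ref{thm:inhomoassocjump} and \ref{thm:inhomoPUODnec}) and in the rational-to-real passage already written out for Theorem \ref{thm:resnickadditive}. The only slightly delicate bookkeeping item is confirming the boundedness hypothesis \eqref{inhomobounded} holds uniformly in $s$ for an arbitrary additive process; if one prefers to sidestep this one simply works on $[0,T]$ for arbitrary $T$, which costs nothing since the conclusion is a statement about each fixed $s$ separately.
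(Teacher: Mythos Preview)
Your proposal is correct and is essentially the paper's own argument: the paper's one-line proof cites Theorems \ref{thm:inhomoPUODnec} and \ref{thm:resnickadditive}, which together give exactly the equivalence you obtain from Corollary \ref{CorEquiv} (itself a repackaging of Theorems \ref{thm:inhomoassocjump} and \ref{thm:inhomoPUODnec}) plus the $\Q_+\!\to\!\R_+$ passage already carried out in the proof of Theorem \ref{thm:resnickadditive}. Your explicit discussion of the standing hypotheses (stochastic monotonicity, rich domain, $s$-continuity, the bound \eqref{inhomobounded}) just spells out what the paper has absorbed into the sentence ``we can apply Theorems \ref{thm:inhomoliggett}--\ref{thm:inhomoPUODnec} to additive processes.''
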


\begin{proof}
The corollary is a direct result of Theorems \ref{thm:inhomoPUODnec} and  \ref{thm:resnickadditive} 
\end{proof}


An example of an interesting additive process to which these dependence results apply is the following L\'{e}vy-driven volatility model:

\begin{example}
{\rm Let $L=(L_t)_{t\geq0}$ be a \Levy process in $\R^d$ with triplet $(b, 0, \nu)$. Let $\sigma:\R_+\rightarrow\R_+^{d\times d}$. The process $X=(X_t)_{t\geq0}$ defined by 
\begin{center}
$X_t = \int_0^t \sigma(s) dL_s$
\end{center}
is called a \textbf{\Levy process with deterministic volatility} and is an additive process.

} 
\end{example}

\subsection{Comparison of Markov processes}
\label{sec:comparison}

Some of the techniques we employed in Section \ref{sec:mainresults} to prove results on dependence structures can also be used to prove comparison theorems of certain Markov processes. Let $\mcl{F}$ be a cone of functions, such as $\mcl{F}_i=\{f:\R^d\rightarrow\R, f \text{ non-decreasing}\}$ (for more on cones $\mcl{F}$, see \cite{Ruschendorf2016}). For time-homogeneous Markov processes $X$ and $Y$ with semigroups $(S_t)_{t\geq0}$ and $(T_t)_{t\geq0}$ and generators $\mcl{A}$ and $\mcl{B}$, respectively, we say that \textbf{$Y$ dominates $X$ with respect to $\mcl{F}$} if $S_t f \leq T_t f$, for all $t\geq0$ and $f\in \mcl{F}$.  

For time-inhomogeneous Markov processes $X$ and $Y$, with Markov evolutions $(S_{s,t})_{s\leq t}$ and $(T_{s,t})_{s\leq t}$, we say \textbf{$Y$ dominates $X$ with respect to $\mcl{F}$} if $S_{s,t}f \leq T_{s,t} f$ for all $s\leq t$ and all $f\in \mcl{F}$. R\"{u}schendorf  has proven comparison theorems for general Markov processes which are time-homogeneous (2008) \cite{Ruschendorf2008} and time-inhomogeneous (2016) \cite{Ruschendorf2016}. These sufficient conditions were based on the generators of the Markov process. We show that in the case of two rich Feller processes, sufficient conditions for domination can be given using the extended generator $I(p)$ (Theorem \ref{thm:tufellercomp}).  Then we use that result and the technique Prescription \ref{pre1} to obtain a nice comparison theorem for time-inhomogeneous Feller evolutions systems (Theorem \ref{thm:mycomparison}). We  consider the cone of $\mcl{F}_i$ in these theorems.

\begin{theorem}
\label{thm:tufellercomp}
If $X$ and $Y$ are rich Feller processes and have symbols $p^X$ and $p^Y$, respectively, then if $S_t f\in\mcl{F}_i$ for $f\in C_b(\R^d) \cap \mcl{F}_i$ and $I(p^X) f \leq I(p^Y) f$ for all $f\in C_b^2(\R^d)\cap\mcl{F}_i$, then $S_t f\leq T_t f$ for all $f\in C_b(\R^d)\cap \mcl{F}_i$.
\end{theorem}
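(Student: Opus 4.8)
The plan is to reuse the variation-of-parameters machinery of Lemma \ref{extcauchy}, exactly as in the sufficiency half of Theorem \ref{thm:inhomoliggett}, but now to compare two semigroups rather than to prove a single correlation inequality. The key observation will be that $S_t f$ — the semigroup of $X$ applied to $f$ — is a \emph{mild solution} of an inhomogeneous Cauchy problem driven by the generator of $Y$, with a nonpositive forcing term that comes precisely from the hypothesis $I(p^X) \le I(p^Y)$. Note that it is stochastic monotonicity of $X$ (i.e.\ $S_t f \in \mcl{F}_i$) that the argument needs, which is why the Duhamel expansion should be set up around $(S_t)$ as the forced family and $(T_t)$ as the propagator, not the other way around.

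First I would reduce to $f \in C_b^2(\R^d)\cap\mcl{F}_i$, and recover general $f \in C_b(\R^d)\cap\mcl{F}_i$ at the end: mollifying $f$ gives $f_n \in C_b^2(\R^d)\cap\mcl{F}_i$ (in fact smooth, and still non-decreasing since the kernel is nonnegative) with $f_n \to f$ locally uniformly and $\sup_n\|f_n\|_\infty \le \|f\|_\infty$, so by dominated convergence against the transition kernels $S_t f_n \to S_t f$ and $T_t f_n \to T_t f$ pointwise, and $S_t f \le T_t f$ survives the limit. Then, for fixed $f \in C_b^2\cap\mcl{F}_i$, set $F(t):=S_t f$. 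Since $I(p^X)$ extends the Feller generator of $X$ and $F(t)\in C_b^2 = \mcl{D}(I(p^X))$, one has $F'(t) = I(p^X)F(t)$; rewriting,
\[
F'(t) = I(p^Y)F(t) + G(t), \qquad G(t) := \bigl(I(p^X) - I(p^Y)\bigr)F(t).
\]
By the stochastic monotonicity hypothesis $F(t)=S_t f \in \mcl{F}_i$, and since $F(t)\in C_b^2$, the assumption $I(p^X)h \le I(p^Y)h$ on $C_b^2\cap\mcl{F}_i$ gives $G(t)\le 0$ for every $t\ge 0$; a routine check from strong continuity of $(S_t)$ and the mapping properties of the extended generators shows $G$ is continuous on $[0,\infty)$ for local uniform convergence. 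Applying Lemma \ref{extcauchy} with the symbol $p^Y$ and its Feller semigroup $(T_t)$ — conditions (a)–(c) hold by construction, and $x\mapsto p^Y(x,0)$ is continuous in the ambient setup — yields
\[
F(t) = T_t F(0) + \int_0^t T_{t-s}G(s)\,ds = T_t f + \int_0^t T_{t-s}G(s)\,ds .
\]
Since $(T_t)$ is positivity-preserving and $G(s)\le 0$, the integral term is $\le 0$, hence $S_t f = F(t) \le T_t f$ for all $t\ge 0$, as desired.

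I expect the main obstacle to be the regularity bookkeeping that the above sketch glosses over: to write $F'(t)=I(p^X)F(t)$ and to meet condition (a) of Lemma \ref{extcauchy}, one needs $S_t f \in C_b^2(\R^d) = \mcl{D}(I(p^X)) = \mcl{D}(I(p^Y))$ for all $t\ge 0$ — a smoothing property of the Feller semigroup, the same fact used tacitly in the proof of Theorem \ref{thm:inhomoliggett} when asserting $\tilde T_t hk|_{\{s\}\times\R^d}\in C_b^2$. For rich Feller processes with bounded symbol and $C_c^\infty$ a core this can be justified; alternatively, to avoid it, one can mimic Theorem \ref{thm:inhomoliggett} literally, replacing $F(t)=S_t f$ by the relevant smoothed family, running Lemma \ref{extcauchy} there, and passing to the limit. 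The only other delicate point is the final density step bridging the $C_b^2$-level hypothesis $I(p^X)\le I(p^Y)$ and the $C_b$-level conclusion, which is parallel to the $C_b^2$-versus-$C_b$ bridging already built into Lemma \ref{extliggett}.
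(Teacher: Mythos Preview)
Your proposal is correct and follows essentially the same Duhamel/variation-of-constants argument as the paper: both feed Lemma \ref{extcauchy} with the semigroup $(T_t)$ as propagator and the forcing term $(I(p^X)-I(p^Y))S_t f$ (or its negative), using stochastic monotonicity of $X$ to get the sign. The only cosmetic difference is that the paper sets $F(t)=T_t f - S_t f$ (so $F(0)=0$ and $G\ge 0$) whereas you set $F(t)=S_t f$ (so $F(0)=f$ and $G\le 0$); these are trivially equivalent rearrangements, and your added mollification step and explicit remarks on the $S_t f\in C_b^2$ regularity requirement are points the paper's proof leaves implicit.
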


\begin{proof}
Pick $f\in C_b^2(\R^d)\cap \mcl{F}_i$. Define $F:[0,\infty)\rightarrow C_b(\R^d)$ and $G:[0,\infty)\rightarrow C_b(\R^d)$ by
\begin{equation*}
F(t):= T_t f - S_t f \hspace{.5cm} \text{ and } \hspace{.5cm} G(t):= F'(t) - I(p^Y)F(t) = (I(p^Y) - I(p^X))S_t f.
\end{equation*}
$G(t)\geq0$, since $S_t f\in C_b^2(\R^d)\cap \mcl{F}_i$ and by our assumption. Thus since $F,G$ are continuous (wrt locally uniform convergence), then by Theorem \ref{extcauchy}, 
$$F(t) = T_t F(0) + \int_0^t T_{t-r} G(r) dr = \int_0^t T_{t-r} G(r) dr\geq0,$$
giving us our desired result.
\end{proof}


\begin{theorem}
\label{thm:mycomparison}
Let $X$ and $Y$ be Feller evolution processes with FESs $(S_{s,t})_{s\leq t}$ and $(T_{s,t})_{s\leq t}$, generators $(\mcl{A}_s)_{s\geq0}$ and $(\mcl{B}_s)_{s\geq0}$ with rich domains, symbols $p_s(x,\xi)$ and $q_s(x,\xi)$ that are $s$-continuous and bounded as in \eqref{inhomocont} and \eqref{inhomobounded}, respectively. Let $C_c^\infty(\R^d)$ be a core for the domains of the generators. Then if $X$ is stochastically monotone (wrt $\mcl{F}_i$), and 
\begin{center}
$I(p_s) f \leq I(q_s)f$, \hspace{.4cm} for all $f\in C_b^2(\R^d)\cap\mcl{F}_i$, for all $s\geq0$,
\end{center} 
then $S_{s,t} f \leq  T_{s,t} f$ for all $f\in C_b(\R^d) \cap \mcl{F}_i$, for all $s\leq t$. 
\end{theorem}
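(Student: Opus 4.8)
The plan is to reduce Theorem \ref{thm:mycomparison} to the time-homogeneous comparison result in Theorem \ref{thm:tufellercomp} by applying B\"{o}ttcher's transformation (Prescription \ref{pre1}) to both $X$ and $Y$, following the same template as the proof of Theorem \ref{thm:inhomoliggett}. Let $\tilde{X}$ and $\tilde{Y}$ denote the transformed processes on $\R_+\times\R^d$, with Feller semigroups $(\tilde{S}_t)_{t\geq0}$ and $(\tilde{T}_t)_{t\geq0}$, bounded symbols $\tilde{p}(\tilde{x},\tilde{\xi})=ir+p_s(x,\xi)$ and $\tilde{q}(\tilde{x},\tilde{\xi})=ir+q_s(x,\xi)$ (so $\tilde{p}(\tilde{x},0)=\tilde{q}(\tilde{x},0)=0$, continuous in $\tilde{x}$ and bounded by \eqref{eq:bddsymb}), and extended generators $I(\tilde{p})f(\tilde{x})=\partial_s f(s,x)+I(p_s)f_s(x)$ and $I(\tilde{q})f(\tilde{x})=\partial_s f(s,x)+I(q_s)f_s(x)$ on $C_b^2(\R_+\times\R^d)$. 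The first step is to lift the generator inequality to the transformed level: for $f\in C_b^2(\R_+\times\R^d)\cap\mcl{F}_i$ and $\tilde{x}=(s,x)$, the slice $f_s$ lies in $C_b^2(\R^d)\cap\mcl{F}_i$, so the hypothesis yields $I(p_s)f_s(x)\leq I(q_s)f_s(x)$; since both operators carry the identical drift coefficient $1$ in the time direction, the $\partial_s f(s,x)$ terms cancel in the difference and we obtain $I(\tilde{p})f\leq I(\tilde{q})f$ on $C_b^2(\R_+\times\R^d)\cap\mcl{F}_i$.

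The main obstacle is that $\tilde{X}$ is \emph{not} stochastically monotone on $\R_+\times\R^d$ (exactly as noted in the proof of Theorem \ref{thm:inhomoliggett}), so Theorem \ref{thm:tufellercomp} cannot be invoked directly; instead I would reproduce its Cauchy-problem argument one time-slice at a time. Fix $f\in C_b^2(\R_+\times\R^d)\cap\mcl{F}_i$ and set $F(t):=\tilde{T}_tf-\tilde{S}_tf$ and $G(t):=F'(t)-I(\tilde{q})F(t)=(I(\tilde{q})-I(\tilde{p}))\tilde{S}_tf$. For each fixed $s\geq0$, the restriction of $\tilde{S}_tf$ to $\{s\}\times\R^d$, viewed as a function of $x$, is $x\mapsto S_{s,s+t}f_{s+t}(x)$, which lies in $C_b^2(\R^d)\cap\mcl{F}_i$ by stochastic monotonicity of $(S_{s,t})_{s\leq t}$; passing (as in Theorem \ref{thm:inhomoliggett}) to a function constant in the first coordinate that agrees with this slice lets us apply the lifted inequality from the first step. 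Evaluating $G(t)$ at $\tilde{x}=(s,x)$, the common $\partial_s$ contributions cancel and what is left is $I(q_s)(\tilde{S}_tf)_s(x)-I(p_s)(\tilde{S}_tf)_s(x)\geq0$, so $G(t)\geq0$ on all of $\R_+\times\R^d$.

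With $G(t)\geq0$ established, I would apply Lemma \ref{extcauchy} to the pair $(F,G)$ using the generator and bounded symbol $\tilde{q}$ of $\tilde{Y}$ (the hypothesis $\tilde{x}\mapsto\tilde{q}(\tilde{x},0)=0$ continuous is met, and $F,G$ are continuous with respect to local uniform convergence just as claimed in the proof of Theorem \ref{thm:inhomoliggett}). Since $F(0)=f-f=0$, this gives $F(t)=\int_0^t\tilde{T}_{t-r}G(r)\,dr\geq0$ by positivity of $(\tilde{T}_t)_{t\geq0}$, i.e. $\tilde{S}_tf\leq\tilde{T}_tf$ for all $t\geq0$. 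Unwinding the transformation through \eqref{eq:semigroup} yields $S_{s,s+t}f_{s+t}(x)\leq T_{s,s+t}f_{s+t}(x)$ for all $s,t\geq0$ and $x\in\R^d$; specializing to $f$ constant in the first coordinate produces $S_{s,t}g\leq T_{s,t}g$ for all $g\in C_b^2(\R^d)\cap\mcl{F}_i$ and all $s\leq t$. A routine final step mollifies an arbitrary $g\in C_b(\R^d)\cap\mcl{F}_i$ by functions in $C_b^2(\R^d)\cap\mcl{F}_i$ and uses that $S_{s,t}$ and $T_{s,t}$ are positivity-preserving contractions to pass to the limit, extending the inequality to all of $C_b(\R^d)\cap\mcl{F}_i$. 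The only genuinely delicate point, as in Theorem \ref{thm:inhomoliggett}, is organizing this slice-by-slice version of the argument around the failure of stochastic monotonicity of $\tilde{X}$; the rest is bookkeeping inside B\"{o}ttcher's framework.
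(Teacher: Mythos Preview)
Your proposal is correct and follows essentially the same route as the paper: transform both processes via Prescription \ref{pre1}, lift the generator inequality to $I(\tilde{p})f\leq I(\tilde{q})f$, set $F(t)=\tilde{T}_tf-\tilde{S}_tf$ and $G(t)=(I(\tilde{q})-I(\tilde{p}))\tilde{S}_tf$, use the slice-by-slice monotonicity to show $G\geq0$, apply Lemma \ref{extcauchy}, and unwind. The paper's proof is slightly terser and stops after obtaining the inequality for $f\in C_b^2(\R^d)\cap\mcl{F}_i$, whereas you explicitly include the mollification step to reach $C_b(\R^d)\cap\mcl{F}_i$; otherwise the arguments are the same.
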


\begin{proof}
Let $\tilde{X} = (\tilde{X}_t)_{t\geq0}$ and $\tilde{Y} = (\tilde{Y}_t)_{t\geq0}$ on $\R_+\times \R^d$ be transformations of $X$ and $Y$, given by Prescription \ref{pre1}, which have Feller semigroups $(\tilde{S}_t)_{t\geq0}$ and  $(\tilde{T}_t)_{t\geq0}$, generators $(\mcl{\tilde{A}},\mcl{D}(\mcl{\tilde{A}}))$ and $(\mcl{\tilde{B}},\mcl{D}(\mcl{\tilde{B}}))$ with rich domains, bounded symbols $\tilde{p}(\tilde{x},\tilde{\xi})$ and $\tilde{q}(\tilde{x},\tilde{\xi})$, and extended generators $I(\tilde{p})$ and $I(\tilde{q})$ 
respectively, as given to us by equations \eqref{eq:semigroup} to \eqref{eq:bddsymb}.

Observe that for all $f\in C_b^2(\R_+\times\R^d)\cap\mcl{F}_i$, we have
\begin{equation}
\label{eq1:thmcomp}
\begin{split}
I(\tilde{p}) f(\tilde{x}) & = \frac{\partial}{\partial s} f(s,x) + I(p_s) f_s(x)
  \leq  \frac{\partial}{\partial s} f(s,x) + I(q_s) f_s(x)
 = I(\tilde{q}) f(\tilde{x}).
\end{split}
\end{equation}
Now let $h\in C_b^2(\R_+\times\R^d)\cap\mcl{F}_i$. Fix $s\geq0$. Then $\tilde{S}_t h|_{\{s\}\times\R^d} \in C_b^2(\{s\}\times\R^d) \cap \mcl{F}_i$ since $X$ is stochastically monotone. Then there exists $v\in C_b^2(\R_+\times\R^d)\cap \mcl{F}_i$ that is constant wrt first argument in $\R_+$ and $v(s,x) =\tilde{S}_t h|_{\{s\}\times\R^d}(s,x)$, for all $x\in\R^d$. Then by \eqref{eq1:thmcomp}, $I(\tilde{p})v \leq I(\tilde{q})v$. This implies that on $\tilde{x} = (s,x)$, 
\begin{equation}
\label{eq2:thmcomp}
I(\tilde{p})(\tilde{S}_t h|_{\{s\}\times\R^d})(s,x) \leq I(\tilde{q})(\tilde{S}_t h|_{\{s\}\times\R^d})(s,x). 
\end{equation}
Define $F,G:[0,\infty)\rightarrow C_b(\R_+\times\R^d)$ be defined by 
\begin{equation*}
F(t):= \tilde{T}_t h - \tilde{S}_t h \hspace{.5cm} \text{ and } \hspace{.5cm} G(t):= F'(t) - I(\tilde{q}) F(t) = (I(\tilde{q}) - I(\tilde{p}))\tilde{S}_t h
\end{equation*}

which are both continuous with respect to locally uniform convergence. Then by Theorem \ref{extcauchy}, $F(t) = \int_0^t \tilde{T}_{t-r} G(r) dr$. Hence, on $\tilde{x} = (s,x)$, $G(r) (\tilde{x}) = (I(\tilde{q}) - I(\tilde{p}))\tilde{S}_t h|_{\{s\}\times\R^d} (s,x) \geq0$ by \eqref{eq2:thmcomp}. Thus, $F(t)(\tilde{x})\geq0$. This implies $S_{s,s+t} h_{s+t}(x)\leq T_{s,s+t} h_{s+t}(x)$ for all $x\in\R^d$. Let $f\in C_b^2(\R^d)\cap\mcl{F}_i$. Then choose $h\in C_b^2(\R_+\times\R^d)\cap\mcl{F}_i$ that is constant in the first argument, and $h(\tilde{x})=f(x)$. Then we have $S_{s,s+t} f(x) = S_{s,s+t} h_{s+t}(x)\leq T_{s,s+t} h_{s+t}(x) = T_{s,s+t} f(x),$
giving us our desired result.
\end{proof}

 For more on comparison theorems of Markov processes, see R\"{u}schendorf \cite{Ruschendorf2008, Ruschendorf2016}.

\mypar 

\textbf{Acknowledgements}: The author would like to thank Dr. Jan Rosinski for his helpful advice and guidance regarding the ideas of this paper.

\newpage

\bibliographystyle{acm}

\bibliography{references-paper2_tu}

\end{document}